\newtheorem{proposition}{Proposition}[section]
\newtheorem{lemma}{Lemma}[section]
\newtheorem{remark}{Remark}[section]
\newtheorem{corollary}{Corollary}[section]
\providecommand{\U}[1]{\protect\rule{.1in}{.1in}}
\def\a{\alpha}
\def\b{\beta}
\def\sig{\sigma}
\def\Sig{\Sigma}
\def\ga{\gamma}
\def\Ga{\Gamma}
\def\la{\lambda}
\def\th{\theta}
\def\bth{\boldsymbol \theta}
\def\del{\delta}
\def\vareps{\varepsilon}
\def\ov{\overline}
\def\ova{\overline{\a}}
\def\ovb{\overline{\b}}
\def\ovth{\overline{\th}}
\def\dover{\buildrel d \over=}
\def\wtilde{\widetilde}
\def\inar1{INAR\,($1$)}
\def\ar1{AR\,($1$)}
\def\bzp{$\mathbb{Z}_+$}
\def\iid{i.i.d.\ }
\begin{document}

\begin{center}
{\large\bf On the novel geometric and negative binomial \inar1 process}

\vskip 1 cm

Nadjib Bouzar\\
Department of Mathematical Sciences,\\
University of Indianapolis,\\
Indianapolis, IN 46227, USA\\
Email: nbouzar@uindy.edu\\
ORCID iD: 0000-0002-4841-5841

\vskip 1.5 cm

{\large \bf Abstract}
\end{center}
Guerrero et al. \cite{GBSO} propose a novel approach to building first-order integer-valued autoregressive (\inar1) models based on the concept of thinning. The standard approach requires that the thinning operator be defined first and \inar1 models with either a specified marginal (the forward approach) or a specified innovation (the backward approach) are developed. In contrast, the approach in \cite{GBSO} is to start out by specifying both the marginal distribution of the process and that of its innovation sequence, and then proceed to identify the thinning operator by solving a functional equation. In this article we discuss the connection between the thinning operators the authors obtained for their novel geometric and negative binomial \inar1 models and the thinning operator introduced in \cite{AB1} and \cite{AB2}. More specifically, we show that the existence of the two models has been established in \cite{AB1} using the forward approach and a different parameterization. In the process, we strenghthen some of the authors' results obtained for the novel geometric \inar1 process and we extend their results to the novel negative binomial \inar1 process.
 
{\bf Key words and phrases}: Markov chain, branching processes with immigration, thinning operators, distributional properties, time reversibility, transition probabilities.

{\bf 2020 Mathematics Subject Classifications: 60E99, 62M10}.\newpage

\section{Introduction}

Guerrero et al. \cite{GBSO} propose a novel approach to building stationary first-order integer-valued autoregressive (\inar1) processes. based on the concept of thinning. The standard approach requires that the thinning operator be defined first and \inar1 models with either a specified marginal (the forward approach) or a specified innovation (the backward approach) are developed. In contrast, the approach in \cite{GBSO} is to start out by specifying both the marginal distribution of the process and that of its innovation sequence, and then proceed to identify the thinning operator by solving a functional equation. In this article we discuss the connection between the thinning operators the authors derived for their novel geometric and negative binomial \inar1 models and the thinning operator introduced in \cite{AB1} and \cite{AB2}. More specifically, we show that the existence of the two models has been established in \cite{AB1} using the forward approach that only requires the marginal distribution of the process be negative binomial. It is shown in particular that several distributional properties in \cite{GBSO} follow essentially from results established in \cite{AB1} under a different parameterization. In the process we strenghthen some of the authors' results obtained for the novel geometric \inar1 process by simplifying some of their assumptions and proofs and we proceed to extend their results to the novel negative binomial \inar1 process. We also  discuss in depth estimation procedures of key parameters of the novel negative binomial \inar1 process by capitalizing on results established by several authors for branching processes with immigration.

The paper is organized as follows. We recall some basic facts in  Section 2. Section 3 is devoted to the novel geometric \inar1 process introduced in \cite{GBSO}. We simplify a few results, along with their proofs, and remove some unneeded assumptions for a couple of key results. The authors' novel negative binomial \inar1 process is studied in detail in Section 4. It is shown that its distributional properties follow from results established in \cite{AB1}. Estimation procedures of its key parameters are the object of Section 5.

We will use the notation $\ov{a}=1-a$ for any $a \in [0,1]$ throughout the article. We also note that when we cite the reference \cite{GBSO} we are also including the supplementary material accessible from the journal's website. Finally, we refer to the shifted geometric($p$) distribution, $0<p<1$, as the distribution with pmf $\{f_k\}$ and pgf $P(s)$
\begin{equation}
f_k=\overline{p}p^{k-1}  (k \ge 1)  \quad \hbox{and} \quad P(s)=\frac{\ov{p}s}{1-ps}. \label{shifted_Geo}
\end{equation}    

\section{Some basic facts}

We start out by recalling some basic facts. Let $\bth$ be a possibly multivariate parameter and $X$ a \bzp-valued random variable. The thinning operator $\bth\star X$  is defined by the equation
\begin{equation}
\bth\star X=\sum_{i=1}^X G_i,\label{thin_star}
\end{equation} 
where $(G_i, i\ge 1)$ is a sequence of iid \bzp-valued random variables independent of $X$. The probability generating function (pgf) $\Psi_{\bth\star X}(s)$ of $\bth\star X$ satisfies the equation
\begin{equation}
\Psi_{\bth\star X}(s)=\Psi_X(\Psi_G(s)),\label{pgf_thin_star}
\end{equation}
where $\Psi_X(s)$ is the pgf of $X$ and $\Psi_G(s)$ is the marginal pgf of the sequence $\{G_i\}$. 
 
The operation $\star$ is to be thought of as a scalar multiplication that preserves the discrete nature of the factor $X$. The moment properties of $\bth\ast X$ are deduced from those of $X$ from standard random summation techniques (see \cite{GBSO} and the accompanying supplementary material).

A sequence $(X_t, t\ge 0)$ of \bzp-valued random variables is an \inar1 process if for any $t \ge 0$,
\begin{equation}
X_{t+1}=\bth\star X_t + \vareps_{t+1},\label{eq_inar1}
\end{equation}
where $(\vareps_t, t\ge 0)$ is an \iid sequence of \bzp-valued random variables, referred to as the innovation sequence of $\{X_t\}$. The sequences $\{\vareps_t\}$  and $\{G_i\}$ (of (\ref{thin_star})) are assumed independent.

It is clear that the \inar1 process $\{X_t\}$ of (\ref{eq_inar1}) is a branching process with stationary immigration, and thus is a Markov chain (cf. \cite{ATNE}).  As such, necessary and sufficient conditions for the stationarity of $\{X_t\}$ are well  known. They are based essentially on the existence of a stationary distribution $(\pi_k, k\ge 0)$ for $\{X_t\}$. In the forward approach, discrete self-decomposability of the marginal distribution insures the existence of $\{\pi_k\}$ (see Proposition 2.5 and Lemma 4.1 in \cite{AB1}). In the backward approach, an integrability condition or, equivalently, a log-moment condition on $\vareps_t$ insures the existence of $\{\pi_k\}$ (see Proposition 2.1 in \cite{AB1} and references therein). Either one of the two conditions  can be used for the novel approach in \cite{GBSO}. Stationarity of $\{X_t\}$ is achieved by making $\{\pi_k\}$ the initial of the process at $t=0$. This will be assumed in the remainder of the paper without further reference. 

The marginal pgf's  $\Psi_X(s)$ and $\Psi_\vareps(s)$ of the stationary \inar1 process (\ref{eq_inar1}) and its innovation sequence must satisfy the functional equation
\begin{equation}
\Psi_X(s)=\Psi_X[\Psi_G(s)]\Psi_\vareps (s), \label{fcn_eq_1}
\end{equation}
where $\Psi_G(s)$ is the common pgf of the $G$ variables in (\ref{thin_star}). In the standard approach, $\Psi_G(s)$ is always known. If, in addtion, either $\Psi_X(s)$ or $\Psi_\vareps(s)$, but not both, is specified, the marginal pgf of the innovation process (forward approach), respectively, the marginal pgf of the \inar1 process (backward approach), arises as the solution of the functional equation (\ref{fcn_eq_1}). In \cite{GBSO} both $\Psi_X(s)$ and $\Psi_\vareps(s)$ are specified, and $\Psi_G(s)$ is identified, again via (\ref{fcn_eq_1}), as
\begin{equation}
\Psi_G(s)=\Psi_X^{-1}\Bigl(\frac{\Psi_X(s)}{\Psi_\vareps(s)}\Bigr). \label{fcn_eq_2}
\end{equation}
 
In the remainder of this article, we will designate by $\star$ the thinning operator characterized by (\ref{fcn_eq_2}). We will use a different symbol for other thinning operators.
 
As shown in \cite{GBSO}, the binomial thinning operator with $\Psi_G(s)=1-\a+\a s$, $\a\in (0,1)$ is the solution to (\ref{fcn_eq_2}) for the stationary \inar1 model with a prescribed Poisson($\la$) marginal and a Poisson($\la\ov{\a}$) innovation.
We provide another important example. Let $\ga\in (0,1]$, $\a,\th\in (0,1)$, and $\la>0$. A stationary \inar1 process $\{X_t\}$ whose marginal and innovation distributions have pgf 
\begin{equation}
\Psi_X(s)=\exp\Bigl\{ -\la\Bigl(\frac{1-s}{1-\th s}\Bigr)^\ga\Bigr\} \quad \text{and} \quad \Psi_\vareps(s)=
\exp\Bigl\{ -\la(1-\a^\ga)\Bigl(\frac{1-s}{1-\th s}\Bigr)^\ga\Bigr\}\label{comp_stable_1}
\end{equation}
can be constructed (via (\ref{fcn_eq_2})) with the thinning operator $(\a,\th)\star (\cdot)$ characterized by the linear fractional pgf
\begin{equation}
\Psi_G(s) =\frac{\ov{\a}-(\th-\a)s}{1-\a\th-\ov{\a}\th s}.\label{comp_stable_2}
\end{equation}
The proof is an easy exercise and is therefore omitted. Note that if $0<\ga<1$, then $\{X_t\}$ is heavy-tailed (with an infinite mean), whereas the special case $\ga=1$ gives rise to the stationary \inar1 process with a Polya-Aeppli marginal distribution as well as a Polya-Aeppli innovation. This model was proposed and studied in \cite{AB1} (Sections 2 and 3) by way of the forward approach with the expectation thinning operator $(\b,\th)\odot (\cdot)$ that acts on a \bzp-valued random variable $X$ as follows:
\begin{equation}
(\b,\th)\odot X =\sum_{i=1} Y_i,\label{AB1_operator}
\end{equation}
where $\b\in (0,1)$, $\th\in [0,1)$, and $\{Y_i\}$ is an \iid sequence of \bzp-valued random variables, independent of $X$,  with marginal pgf
\begin{equation}
\Psi_{\b,\th}(s)=1-\b\frac{1-s}{1-\ovb\th s}.\label{AB1_operator2}
\end{equation}
Note the case $\th=0$ corresponds to the binomial thinning operator.

It is easily seen the two thinning operators $(\a,\th)\star (\cdot)$ and $(\b,\th)\odot (\cdot)$, characterized respectively by (\ref{comp_stable_2}) and (\ref{AB1_operator2}), are equivalent through the reparameterization $\b=\frac{\a\ovth}{1-\a\th}$ from $\star$ to $\odot$ and, conversely, $\a=\frac{\b}{1-\ovb\th}$ from $\odot$ to $\star$. Consequently, the stochastic properties of the Polya-Aeppli \inar1 process obtained in \cite{AB1} (Section 3) can be rewritten in terms of the $\star$ thinning operator and the parameters $\a$ and $\th$.   

\section{On the novel geometric \inar1 process}

In \cite{GBSO} (Sections 2.1 and 2.3) the authors introduce a stationary \inar1 process $\{X_t\}$ with prespescified negative binomial marginal and innovation distributions with the following pgf's:
\begin{equation}
\Psi_X(s)=(1+\mu r^{-1}(1-s))^{-r}\quad \text{and}\quad \Psi_\vareps(s)=(1+\ova\mu r^{-1}(1-s))^{-r},\label{GBSO_NB}
\end{equation}
where $\a\in (0,1)$, $r>0$, $\mu>0$, $E(X_t)=\mu$ and $E(\vareps_t)=\ova\mu$. 

Following the authors notation we will write $X_t\sim NB(r,\mu)$ and $\vareps_t\sim NB(r,\ova\mu)$

Using (\ref{fcn_eq_2}), the authors identify the expectation thinning operator $\bth\star(\cdot)$ of (\ref{thin_star}) with $\bth=(\a,\mu,r)$ and pgf.
\begin{equation}
\Psi_G(s)=\frac{1+(r^{-1}\ova\mu-\a)(1-s)}{1+r^{-1}\ova\mu(1-s)}=1-\a\frac{1-s}{1+r^{-1}\ova\mu(1-s)}.\label{NB_thin_star}
\end{equation}

The special case $r=1$ gives rise to the authors' novel geometric \inar1 process with $X_t\sim NB(1,\mu)$, $\vareps_t\sim NB(1, \ova\mu)$ and
\begin{equation}
\Psi_G(s)=\frac{1+(\ova\mu-\a)(1-s)}{1+\ova\mu(1-s) }=1-\a\frac{1-s}{1+\ova\mu(1-s)}.\label{Geo_thin_star}
\end{equation}
In the remainder of the paper, we will refer to the above thinning operators as follows
\begin{equation}\label{thin_Geo_NB}
\bth\star(\cdot)=
\begin{cases}
(\a,\mu)\star(\cdot) & \mbox{if }r=1\\
(\a,\mu,r)\star(\cdot) &\mbox{if }r>0 \mbox{ and }r\ne 1.
\end{cases}
\end{equation}

The novel geometric \inar1 model is fully studied in \cite{GBSO}. Its main properties are given, including  some estimation methods. The results of a Monte Carlo simulation are discussed. The authors conclude with a real-world application. 

We note at this point that the authors introduce an unnecessarily complicated parameterization to describe the common pmf of the \iid sequence $\{G_i\}$.  We propose a simpler and more useful representation that follows from this version of $\Psi_G(s)$,
\begin{equation}
\Psi_G(s)=1-\a q\frac{1-s}{1-\ov{q}s}, \qquad q=(1+\ova\mu)^{-1}, \label{Geo_thin_star2}
\end{equation}
which leads to the $G$-pmf
\begin{equation}
P(G=k)=
\begin{cases}
1-\a q & \mbox{if }k=0\\
(\a q)q\ov{q}^{k-1} &
\mbox{if }k\ge 1.
\end{cases}\label{G_pmf}
\end{equation}

In the remainder of this section we revisit some of the authors' results and give modified versions of a few distributional properties of the novel geometric \inar1 process by dropping some unnecessary assumptions.

First, we show that the constraint $0< \a < \frac{\mu}{1+\mu}$ in Lemma 2.2 in \cite{GBSO} is not needed to obtain the conditional probability of $(\a,\mu) \star X$ given $X=x$.  

\begin{lemma}
 Assume that $X$ is a \bzp-valued random variables. Then, the conditional probability of $(\a,\mu) \star X$ given $X=x$ is 
\begin{equation}
p_k^{(x)}=P((\a,\mu) \star X=k|X=x)=
\begin{cases}
(1-\a q)^{x} & \mbox{if }k=0\\
\displaystyle\sum_{i=1}^{\min(k,x)}A_i^{(x)} (\a q) B_i^{(k)}(q) &
\mbox{if }k\ge 1,
\end{cases}\label{conv_G}
\end{equation}
where, for $n\ge 1$, $0\le i\le n$, $1\le l\le n$, and $0<y<1$,
\begin{equation}
A_i^{(n)}(y)=\binom{n}{i} y^i(1-y)^{n-i} \quad \text{and} \quad B_l^{(n)}(y)=\binom{n-1}{l-1} y^l(1-y)^{n-l}.
\label{A_and_B}
\end{equation} 
\end{lemma}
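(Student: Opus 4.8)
The plan is to reduce the statement to an $x$-fold convolution and to evaluate that convolution by decomposing the pgf $\Psi_G$ as a simple two-point mixture. Conditionally on $X=x$, the definition (\ref{thin_star}) gives $(\a,\mu)\star X=\sum_{i=1}^{x}G_i$, a sum of $x$ i.i.d.\ copies of $G$, so $p_k^{(x)}$ is precisely the $k$-th term of the $x$-fold convolution of the pmf (\ref{G_pmf}). Equivalently, the conditional pgf is $[\Psi_G(s)]^x$, and I would extract the coefficient of $s^k$.

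First I would rewrite (\ref{Geo_thin_star2}) in mixture form. A short algebraic check (using $q+\ov q=1$) shows
\[
\Psi_G(s)=(1-\a q)+(\a q)\,\frac{qs}{1-\ov q s},
\]
which exhibits $G$ as a mixture: with probability $1-\a q$ it equals $0$, and with probability $\a q$ it follows the shifted geometric($\ov q$) law of (\ref{shifted_Geo}), whose pgf is $\frac{qs}{1-\ov q s}$. Raising to the $x$-th power and expanding by the binomial theorem yields
\[
[\Psi_G(s)]^x=\sum_{i=0}^{x}\binom{x}{i}(\a q)^i(1-\a q)^{x-i}\Bigl(\frac{qs}{1-\ov q s}\Bigr)^{i}=\sum_{i=0}^{x}A_i^{(x)}(\a q)\Bigl(\frac{qs}{1-\ov q s}\Bigr)^{i},
\]
with $A_i^{(x)}$ as in (\ref{A_and_B}).

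The remaining step is to read off the coefficient of $s^k$ term by term. The $i=0$ summand is the constant $(1-\a q)^x=A_0^{(x)}(\a q)$, which accounts for the case $k=0$ and contributes nothing for $k\ge 1$. For $i\ge 1$ I would write $\bigl(\frac{qs}{1-\ov q s}\bigr)^{i}=q^i s^i(1-\ov q s)^{-i}$ and use the negative binomial series $(1-\ov q s)^{-i}=\sum_{j\ge 0}\binom{i+j-1}{j}\ov q^{\,j}s^j$; the coefficient of $s^k$ is then $q^i\binom{k-1}{k-i}\ov q^{\,k-i}=\binom{k-1}{i-1}q^i\ov q^{\,k-i}=B_i^{(k)}(q)$, exactly the Pascal (waiting-time) weight in (\ref{A_and_B}). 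Since $s^i(1-\ov q s)^{-i}$ carries no terms of degree below $i$, this coefficient vanishes unless $i\le k$; together with $i\le x$ this forces the range $1\le i\le\min(k,x)$ in (\ref{conv_G}), and collecting the surviving terms gives the stated formula.

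No step here is genuinely hard: once the mixture decomposition is spotted, everything reduces to the binomial theorem and a single negative binomial series. The only point requiring care is the index bookkeeping—separating $k=0$ from $k\ge 1$ and checking that the lower cutoff $i\ge 1$ and the upper cutoffs $i\le k$, $i\le x$ are correctly dictated by the degrees of the series. A probabilistic reading gives the same answer and could replace the generating-function computation if a more intuitive exposition is preferred: letting $N\sim\mathrm{Binomial}(x,\a q)$ count the nonzero $G_i$, one has $P(N=i)=A_i^{(x)}(\a q)$, and given $N=i$ the sum is the $i$-th success time of a Bernoulli($q$) sequence, hence Pascal with weight $B_i^{(k)}(q)$.
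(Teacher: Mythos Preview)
Your proof is correct and follows essentially the same route as the paper: both identify the conditional pgf as $[\Psi_G(s)]^x$, rewrite $\Psi_G$ as the mixture $(1-\a q)+\a q\cdot\dfrac{qs}{1-\ov q s}$ of a point mass at zero and a shifted geometric, expand by the binomial theorem, and read off coefficients via the negative binomial series $G_q(s)^i=\sum_{k\ge i}\binom{k-1}{i-1}q^i\ov q^{\,k-i}s^k$. Your exposition is simply more explicit about the index bookkeeping and adds a nice probabilistic reading (thinning the nonzero $G_i$'s into a binomial count followed by a Pascal waiting time), but the underlying argument is the same.
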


\begin{proof} Note that the conditional pgf of $(\a,\mu) \star X$ given $X=x$ is $\Psi_G(s)^x$. It is easily seen that the modified parameterization of $\Psi_G(s)$ (cf. (\ref{Geo_thin_star2}) and (\ref{G_pmf})) can be rewritten as 
\begin{equation}
\Psi_G(s) =1-\a q+\a qG_q(s), \label{Psi_G_Geo}
\end{equation}
where $G_q(s)$ is the pgf of the shifted geometric($q$) distribution of (\ref{shifted_Geo}). We have for every $i\ge 1$,
\begin{equation}
G_q(s)^i=\sum_{k=i}^\infty \binom{k-1}{i-1}\,q^i\ov{q}^{k-i}s^i.\label{Truncated_NB}
\end{equation}
An application of the binomial theorem and a standard power series argument lead to (\ref{conv_G}). 
\end{proof}

Next, we modify the formula for the first-order transition probability $p_{ij}=P(X_{t+1}=j|X_t=i)$ given in Lemma 3.1 in \cite{GBSO} and show again that the assumption $0<\a<\frac{\mu}{\mu+1}$ is not needed. 

\begin{proposition}
Let $\{X_t\}$ be a stationary \inar1 process with prespecified $NB(1,\mu)$ marginal and $NB(1,\ova\mu)$ innovation distributions. Then,   
\begin{equation}
p_{ij}=
\begin{cases}
q\ov{q}^j \qquad \mbox{if }i=0\\
\displaystyle A_0^{(i)}(\a q)B_1^{(j+1)}(q)+\sum_{k=1}^j B_1^{(j-k+1)}(q)\sum_{l=1}^{\min(i,k)} A_l^{(i)}(\a q)B_l^{k}(q) & \mbox{if }i\ge 1,
\end{cases}
\label{trans_order_1}
\end{equation}
\end{proposition}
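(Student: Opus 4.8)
The plan is to exploit the additive \inar1 structure (\ref{eq_inar1}), which for $r=1$ reads $X_{t+1}=(\a,\mu)\star X_t+\vareps_{t+1}$, together with the fact that the thinning $(\a,\mu)\star X_t$ and the innovation $\vareps_{t+1}$ are conditionally independent given $X_t$. Consequently the one-step transition probability is the convolution of the conditional law of the thinning, already computed in the preceding lemma (equation (\ref{conv_G})), with the innovation law:
\[
p_{ij}=\sum_{k=0}^{j}P\bigl((\a,\mu)\star X_t=k\mid X_t=i\bigr)\,P(\vareps_{t+1}=j-k).
\]

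First I would pin down the innovation pmf. Since $\vareps_t\sim NB(1,\ova\mu)$ has pgf $(1+\ova\mu(1-s))^{-1}$, a partial-fraction expansion gives the geometric law $P(\vareps_t=m)=q\,\ov{q}^{\,m}$ for $m\ge 0$, with $q=(1+\ova\mu)^{-1}$ exactly the parameter appearing in (\ref{Geo_thin_star2}). The point that makes the final formula close up neatly is that this pmf is itself one of the $B$-coefficients of (\ref{A_and_B}), namely $q\,\ov{q}^{\,m}=B_1^{(m+1)}(q)$, so that $P(\vareps_{t+1}=j-k)=B_1^{(j-k+1)}(q)$.

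Next I would treat the two cases. For $i=0$ the thinning is the empty sum and equals $0$ almost surely, whence $X_{t+1}=\vareps_{t+1}$ and $p_{0j}=q\,\ov{q}^{\,j}$, which is the first line of (\ref{trans_order_1}). For $i\ge 1$ I substitute the two branches of (\ref{conv_G}) into the convolution. The $k=0$ term contributes $(1-\a q)^{i}B_1^{(j+1)}(q)$, which is $A_0^{(i)}(\a q)B_1^{(j+1)}(q)$ since $A_0^{(i)}(\a q)=(1-\a q)^{i}$. The terms with $k\ge 1$ give
\[
\sum_{k=1}^{j}B_1^{(j-k+1)}(q)\sum_{l=1}^{\min(i,k)}A_l^{(i)}(\a q)\,B_l^{(k)}(q),
\]
where I have pulled the innovation factor $B_1^{(j-k+1)}(q)$ outside the inner sum over $l$. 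Adding the two pieces reproduces the second line of (\ref{trans_order_1}).

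I expect the argument to be essentially bookkeeping once the convolution is set up. The only step requiring genuine attention is the recognition that the geometric innovation pmf coincides with $B_1^{(\cdot)}(q)$; this is what allows the answer to be expressed purely in terms of the $A$ and $B$ coefficients. Beyond that, the main care is in keeping the summation ranges and the $\min(i,k)$ truncation correctly aligned when the order of summation is interchanged.
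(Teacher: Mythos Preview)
Your proposal is correct and follows essentially the same route as the paper: both identify $p_{ij}$ as the convolution of the conditional thinning law $\{p_k^{(i)}\}$ from (\ref{conv_G}) with the geometric innovation pmf $q\ov{q}^{\,m}$, split off the $k=0$ term, and rewrite the geometric factor as $B_1^{(j-k+1)}(q)$ via (\ref{A_and_B}). The paper's proof is terser but structurally identical; your explicit observation that $q\ov{q}^{\,m}=B_1^{(m+1)}(q)$ is the same bookkeeping step the paper leaves implicit.
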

\begin{proof}
First, we note  $p_{ij}$ results from the convolution of the pmf $(p_k^{(i)}, k\ge 0)$ of (\ref{conv_G}) and the pmf $(q\ov{q}^k, k\ge 0)$ of $\vareps_t$. It is clear that $p_{0j}=P(\vareps_t=j)$. For $i\ge 1$, we have $p_{ij}=p_0^{(i)}(q\ov{q}^j)+\sum_{k=1}^j p_k^{(i)}(q\ov{q}^{j-k})$, which yields (\ref{trans_order_1}), by way of (\ref{conv_G}) and (\ref{A_and_B}).
\end{proof}

We proceed to extend (\ref{trans_order_1}) to the $h$-th order transition probability from $X_t$ to $X_{t+h}$. 

For $h\ge 0$, we define the sequence $\Psi_G^{(h+1)}(s)=\Psi_G\bigl[\Psi_G^{(h)}(s)\bigr]$ with $\Psi_G^{(0)}(s)=s$ for $\Psi_G(s)$ of (\ref{Geo_thin_star}). 

We gather a few useful results in the following lemma. The proof of each assertion relies on a straightforward induction argument. We omit the details.

\begin{lemma} The following assertions hold.
\begin{enumerate}
\item[(i)] For every $h\ge 1$,
\begin{equation}
\Psi_G^{(h)}(s)=1-\a^hq_h\frac{1-s}{1-(1-q_h)s}, \quad q_h=(1+(1-\a^h)\mu)^{-1}. \label{Psi_G_h}
\end{equation}
\item[(ii)] The $h$-fold composition of the operator $(\a,\mu)\star(\cdot)$ ($h\ge 1$) satisfies 
\begin{equation}
(\a,\mu)\star(\a,\mu)\star\cdots\star(\a,\mu)*Y\dover(\a^h,\mu)\star Y \quad (h\ge 1) \label{h_fold_thin_star}
\end{equation}
for any \bzp-valued random variable $Y$. Moreover, $\Psi_{(\a^h,\mu)\star Y}(s)=\Psi_Y\bigl(\Psi_G^{(h)}(s)\bigr)$.
\item[(iii)] The \inar1 process $\{X_t\}$ of (\ref{eq_inar1}), with  $\bth\star (\cdot)=(\a,\mu)\star(\cdot)$, satisfies the property
\begin{equation}
X_{t+h}\dover (\a^h,\mu)\star X_t +\sum_{j=1}^h(\a^{h-j},\mu)\star\vareps_{t+j}. \label{h_forward_eq}
\end{equation}
for every $h\ge 1$ (note, by convention, $(1,\mu)\star Y=Y$).
\end{enumerate}
\end{lemma}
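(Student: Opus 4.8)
The plan is to prove the three assertions in turn by induction on $h$, with part (i) doing the real work and parts (ii) and (iii) following from it with little extra effort. For (i) I would argue by induction, the base case $h=1$ being exactly the definition of $\Psi_G$ in (\ref{Geo_thin_star2}) once one checks that $q_1=(1+(1-\a)\mu)^{-1}=q$. For the inductive step one computes $\Psi_G^{(h+1)}(s)=\Psi_G\bigl[\Psi_G^{(h)}(s)\bigr]$ directly: since both $\Psi_G$ and the assumed $\Psi_G^{(h)}$ are linear fractional (\emph{M\"obius}) maps, their composition is again linear fractional, and the task reduces to checking that the parameters combine as $\a^h\cdot\a=\a^{h+1}$ together with the stated update $q_h\mapsto q_{h+1}$.

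Rather than grind through that substitution, I would prefer a structural shortcut: a M\"obius map is determined by its two fixed points and its multiplier at one of them. Here $\Psi_G$ fixes $s=1$ (it is a pgf), a short computation shows that its second fixed point is $1+\mu^{-1}$, independent of $\a$, and its multiplier at $s=1$ is $\Psi_G'(1)=\a$ (the mean of $G$). Both fixed points are therefore preserved under iteration, while the multiplier at $s=1$ becomes $\a^h$. Since the candidate $1-\a^hq_h\frac{1-s}{1-(1-q_h)s}$ has precisely these fixed points, $1$ and $1+\mu^{-1}$, and multiplier $\a^h$, it must coincide with $\Psi_G^{(h)}$, which gives (\ref{Psi_G_h}).

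Given (i), part (ii) is immediate. By (\ref{pgf_thin_star}) the $h$-fold composition of $(\a,\mu)\star(\cdot)$ applied to $Y$ has pgf $\Psi_Y\bigl(\Psi_G^{(h)}(s)\bigr)$ (itself a one-line induction using $\Psi_G^{(h)}=\Psi_G\circ\Psi_G^{(h-1)}$). On the other hand, (i) identifies $\Psi_G^{(h)}$ as the $G$-pgf of the single operator $(\a^h,\mu)\star(\cdot)$, namely (\ref{Geo_thin_star2}) with $\a$ replaced by $\a^h$, so $\Psi_Y\bigl(\Psi_G^{(h)}(s)\bigr)=\Psi_{(\a^h,\mu)\star Y}(s)$, which yields both the distributional identity (\ref{h_fold_thin_star}) and its accompanying pgf formula. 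En route this also supplies the semigroup relation $(\a^a,\mu)\star[(\a^b,\mu)\star Y]\dover(\a^{a+b},\mu)\star Y$, which I will need in (iii).

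For (iii) I would iterate the defining recursion (\ref{eq_inar1}) with $\bth\star(\cdot)=(\a,\mu)\star(\cdot)$, again by induction on $h$. Substituting the inductive hypothesis for $X_{t+h-1}$ into $X_{t+h}=(\a,\mu)\star X_{t+h-1}+\vareps_{t+h}$ requires two facts: that $\star$ distributes over independent summands, $(\a,\mu)\star(A+B)\dover(\a,\mu)\star A+(\a,\mu)\star B$ for independent $A,B$ (immediate from writing $\sum_{i=1}^{A+B}G_i$ as two independent partial sums, as in (\ref{thin_star})), and the semigroup relation from (ii). Applying $(\a,\mu)\star(\cdot)$ termwise turns $(\a^{h-1},\mu)\star X_t$ into $(\a^h,\mu)\star X_t$ and shifts each innovation exponent $(\a^{h-1-j},\mu)\mapsto(\a^{h-j},\mu)$; adding the new innovation $\vareps_{t+h}=(\a^0,\mu)\star\vareps_{t+h}=(1,\mu)\star\vareps_{t+h}$ supplies the $j=h$ term, producing (\ref{h_forward_eq}). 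The only points demanding care are the mutual independence of the innovation sequence and of the $G$-sequences used at successive steps (guaranteed by the \inar1 assumptions), which legitimizes the distributional manipulations, and the bookkeeping of the exponent shift in the innovation sum; I expect the M\"obius computation underlying (i) to be the genuine, if routine, obstacle.
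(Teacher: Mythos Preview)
Your proposal is correct. The paper itself supplies no details whatsoever, stating only that ``the proof of each assertion relies on a straightforward induction argument'' and omitting the rest, so your write-up is strictly more informative than what appears in the text.

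Your treatment of (ii) and (iii) is exactly the routine induction the paper has in mind. For (i), however, your M\"obius fixed-point argument is a genuinely cleaner route than the direct substitution the paper implicitly intends. The direct induction would have you plug the assumed form of $\Psi_G^{(h)}$ into $\Psi_G$ and simplify the resulting rational function until the parameters $\a^{h+1}$ and $q_{h+1}$ emerge---doable, but opaque. Your observation that $\Psi_G$ has fixed points $1$ and $1+\mu^{-1}$ (the second independent of $\a$) with multiplier $\a$ at $s=1$ explains \emph{why} the iterates stay in the same one-parameter family: the fixed points are invariants of the iteration, and only the multiplier evolves, multiplicatively. This makes the update $\a\mapsto\a^h$ transparent and the formula for $q_h$ a consequence rather than the target of a computation. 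The only cost is that you must know (or remark) that a M\"obius map with two \emph{distinct} fixed points is determined by those points and the multiplier at one of them; since $\mu>0$ ensures $1\ne 1+\mu^{-1}$, this is harmless here.
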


We now derive the $h$-th order transition probability $p_{ij}^{(h)}=P(X_{t+h}=j|X_t=i)$, $h\ge 1$, of the novel geometric \inar1 process. Note the  more complicated formula obtained in \cite{GBSO}, Proposition 3.3, requires the unnecessary assumption $\a^h < \frac{\mu}{\mu+1}$.

\begin{proposition}
Let $\{X_t\}$ be a stationary \inar1 process with prespecified $NB(1,\mu)$ marginal and $NB(1,\ova\mu)$ innovation distributions. Then for every $h\ge1$, 
\begin{equation}
p_{ij}^{(h)}=
\begin{cases}
 q_h(1-q_h)^j \qquad \mbox{if }i=0\\
\displaystyle A_0^{(i)}(\a^h q_h)B_1^{(j+1)}(q_h)+\sum_{k=1}^j B_1^{(j-k+1)}(q_h)\sum_{l=1}^{\min(i,k)} A_l^{(i)}(\a^h q_h)B_l^{k}(q_h)
 &\mbox{if }i\ge 1.
\end{cases}\label{trans_order_h}
\end{equation}
Note $p_{ij}^{(1)}=p_{ij}$, with $p_{ij}$ of (\ref{trans_order_1}) and $q_1=q$.
\end{proposition}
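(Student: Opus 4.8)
The plan is to exploit the observation that the chain sampled every $h$ steps is again a novel geometric \inar1 process, but with the thinning parameter $\a$ replaced by $\a^h$. First I would iterate the defining relation (\ref{eq_inar1}) $h$ times; by part (ii) of the preceding lemma the $h$-fold composition of $(\a,\mu)\star(\cdot)$ collapses to the single operator $(\a^h,\mu)\star(\cdot)$, so (\ref{h_forward_eq}) gives
\[
X_{t+h}\dover (\a^h,\mu)\star X_t + W_h, \qquad W_h=\sum_{j=1}^h(\a^{h-j},\mu)\star\vareps_{t+j},
\]
with $W_h$ independent of $X_t$ and with the $W$-blocks over disjoint index sets independent across subsampling steps. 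Hence $(X_{hn}, n\ge 0)$ is a stationary \inar1 process whose one-step thinning operator is $(\a^h,\mu)\star(\cdot)$ and whose innovation is $W_h$. By part (i) of the lemma (equation (\ref{Psi_G_h})), the pgf $\Psi_G^{(h)}(s)$ of this operator has exactly the linear-fractional form (\ref{Geo_thin_star2}) with $\a,q$ replaced by $\a^h,q_h$, so $(\a^h,\mu)\star(\cdot)$ is precisely the novel geometric thinning operator attached to the pair $(\a^h,\mu)$.

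The key remaining step is to identify the law of the effective innovation $W_h$. Since the subsampled chain inherits the stationary $NB(1,\mu)$ marginal, applying the functional equation (\ref{fcn_eq_1}) to it forces $\Psi_{W_h}(s)=\Psi_X(s)/\Psi_X\bigl(\Psi_G^{(h)}(s)\bigr)$. Substituting $\Psi_X(s)=(1+\mu(1-s))^{-1}$ together with the expression for $1-\Psi_G^{(h)}(s)$ read off from (\ref{Psi_G_h}) and simplifying, I expect this to reduce to $\Psi_{W_h}(s)=q_h\bigl(1-(1-q_h)s\bigr)^{-1}$, the pgf of the geometric distribution with pmf $q_h(1-q_h)^k$; that is, $W_h\sim NB(1,\overline{\a^h}\mu)$. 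Equivalently one could compute $\Psi_{W_h}(s)=\prod_{j=1}^h\Psi_\vareps\bigl(\Psi_G^{(h-j)}(s)\bigr)$ directly and telescope, but the stationarity route is shorter. This identification is the only genuine computation in the argument, and the linear-fractional bookkeeping is the step I expect to be the main obstacle, since it is the place most prone to algebraic slips.

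Once $W_h$ is identified, the conclusion is immediate: $(X_{hn},n\ge 0)$ is itself a stationary novel geometric \inar1 process with marginal $NB(1,\mu)$, innovation $NB(1,\overline{\a^h}\mu)$, and thinning operator $(\a^h,\mu)\star(\cdot)$. Its one-step transition probability therefore coincides with formula (\ref{trans_order_1}) of the first-order Proposition under the substitution $\a\mapsto\a^h$, $q\mapsto q_h$, which is exactly (\ref{trans_order_h}). Concretely, the conditional law of $(\a^h,\mu)\star X_t$ given $X_t=i$ is obtained from (\ref{conv_G}) with $\a q$ replaced by $\a^h q_h$ and $q$ by $q_h$, and convolving it with the geometric innovation pmf $q_h(1-q_h)^k$ exactly as in the proof of (\ref{trans_order_1}) yields (\ref{trans_order_h}); the case $i=0$ reduces to $p_{0j}^{(h)}=P(W_h=j)=q_h(1-q_h)^j$, matching the first branch.
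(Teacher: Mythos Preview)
Your proposal is correct and follows essentially the same approach as the paper: both use the $h$-step representation (\ref{h_forward_eq}), identify the effective innovation as $NB(1,(1-\a^h)\mu)$, and then repeat the convolution argument from the $h=1$ case with $(\a,q)$ replaced by $(\a^h,q_h)$. The only minor difference is that you identify $\Psi_{W_h}$ via the stationarity relation $\Psi_{W_h}(s)=\Psi_X(s)/\Psi_X(\Psi_G^{(h)}(s))$, whereas the paper computes the telescoping product $\prod_{j=1}^h\Psi_\vareps(\Psi_G^{(h-j)}(s))$ directly; you yourself note this alternative, and the two routes are equivalent.
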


\begin{proof} First, we note that for every $h\ge 1$,
\begin{equation}
\Psi_X\bigl(\Psi_G^{(h)}(s)\bigr)={1+(1-\a^h)\mu(1-s)\over 1+\mu(1-s)} \text{ and }  \Psi_\vareps\bigl(\Psi_G^{(h)}(s)\bigr)={1+(1-\a^h)\mu(1-s)\over 1+(1-\a^{h+1})\mu(1-s)}, \label{Psi_X_vareps_h}
\end{equation}
where $\Psi_X(s)$ and $\Psi_\vareps(s)$ are the marginal pgf's of the geometric \inar1 process and its innovation (see (\ref{GBSO_NB}) with $r=1$). Letting $\vareps_t^{(h)}=\sum_{j=0}^h(\a^{h-j},\mu)\star\vareps_{t+j}$, it follows by (\ref{h_fold_thin_star}),  (\ref{Psi_X_vareps_h}), and independence, that the pgf of $\vareps_t^{(h)}$ is
\begin{equation}
\Psi_{\vareps^{(h)}}(s)=\prod_{j=1}^h \Psi_\vareps\bigl(\Psi_G^{(h-j)}(s)\bigr)=\frac{q_h}{1-(1-q_h)s}, \label{pgf_vareps_h}
\end{equation}
with $q_h$ of (\ref{Psi_G_h}).That is, $\vareps_t^{(h)}\sim NB(1,(1-\a^h)\mu)$ with pmf $\bigl(q_h(1-q_h)^k, k\ge 0\bigr)$. Noting by Lemma 3.2-(i) that $\Psi_G^{(h)}(s)=1-\a^hq_h+\a^hq_hG_{q_h}(s)$, where $G_{q_h}(s)$ is the pgf of a shifted geometric($q_h$) distribution. The exact same argument used to derive (\ref{conv_G}) in  the case $h=1$ leads to
\begin{equation}
p_k^{(x)}(h)=P((\a^h,\mu) \star X=k|X=x)=
\begin{cases}
(1-\a^h q_h)^{x} & \mbox{if }k=0\\
\displaystyle\sum_{i=1}^{\min(k,x)}A_i^{(x)} (\a^h q_h) B_i^{(k)}(q_h) &
\mbox{if }k\ge 1,
\end{cases}\label{conv_G_h}
\end{equation}
where the functions $A_i^{(x)}$ and $B_i^{(k)}$ are given in (\ref{A_and_B}). Therefore, by (\ref{h_forward_eq})  $p_{ij}^{(h)}$ results from the convolution of the pmf's  $(p_k^{(i)}(h), k\ge 0)$ and $\bigl(q_h(1-q_h)^k, k\ge 0\bigr)$. We obtain (\ref{trans_order_h}) by using the same argument that led to (\ref{trans_order_1}) in the case $h=1$.
\end{proof}

In the next section we give two simple formulas for the conditional pgf and the conditional variance of $X_{t+h}$ given $X_t$ (by setting $r=1$ in Proposition 4.2, (i) and (iii)).

\section{On the novel negative binomial \inar1 process} 

The aim of this section is to extend the results obtained in \cite{GBSO} for the novel geometric \inar1 process to the novel negative binomial \inar1 process $\{X_t\}$ (see Section 2.3 in \cite{GBSO}), where $X_t\sim NB(r,\mu)$, $\vareps_t\sim NB(r,\ova\mu)$, $\mu=E(X_t)$, and where the novel thinning operator of (\ref{thin_star}) is $(\a,\mu,r)\star(\cdot)$ is characterized by the pgf $\Psi_G(s)$ of (\ref{NB_thin_star}).

We recall that in \cite{AB1} the authors used a different parameterization for the negative binomial distribution. For a \bzp-valued random variable $X$ we write $X\sim NB1(r,\th)$ with parameters $0<\th<1$ and $r>0$ (as opposed to $X\sim NB(r,\mu)$) if its pgf $\Psi_X(s)$ and its pmf $\{P(X=k\}$ are as follows:
\begin{equation}
\Psi_X(s)=\Bigl(\frac{\ovth}{1-\th s}\Bigr)^r \quad \text{and} \quad P(X=k)={\Ga(k+r)\over k!\Ga(r)}\ov\th^r \th^k \quad(k\ge 0).\label{NB1_dist}
\end{equation}
The mean and variance of $X$ are $E(X)=\frac{r\th}{\ov\th}$ and $Var(X)=\frac{r\th}{\ov\th^2}$.

One can show that all the distributional properties of the novel geometric \inar1 process ($r=1$) obtained in Section 3 in \cite{GBSO} extend to the novel negative binomial \inar1 process by a modification of the proofs therein and in section 3 above. Instead, we show via a suitable reparameterization that the $(\a,\mu,r)\star(\cdot)$ operator acts as the $(\b,\th)\odot(\cdot)$ operator introduced in \cite{AB1} and \cite{AB2} and characterized by equations (\ref{AB1_operator}) and (\ref{AB1_operator2}) above, thus leading to the conclusion that the novel $NB(\a,\mu)$ \inar1 process in \cite{GBSO} is equivalent to the $NB1(r,\th)$ \inar1 process in \cite{AB1}. As a consequence,  the distributional properties of the novel model will be derived from those established for its counterpart in \cite{AB1}.

We start out with a useful lemma. Its proof is an easy exercise and is therefore omitted. 

\begin{lemma}
Let $r>0$ and $\b,\th\in (0,1)$.
\begin{enumerate}
\item[(i)] The thinning operator  $(\b,\th)\star(\cdot)$ (cf.  (\ref{thin_star}) and and (\ref{fcn_eq_2}) ) obtained by prespecifying $X_t\sim NB1(r,\th)$ and $\vareps_t\sim NB1(r,\ovb\th)$ is characterized by the pgf $\Psi_G(s)=\Psi_{\b,\th}(s)$ of (\ref{AB1_operator2}). Therefore, the thinning operators $(\b,\th)\odot(\cdot)$ and $(\b,\th)\star(\cdot)$ are identical and can be used interchangeably.
\item[(ii)] The thinning operator $(\a,\mu,r)\star(\cdot)$ (cf. (\ref{NB_thin_star}) and (\ref{thin_Geo_NB})) can take the form of the $(\b,\th)\star(\cdot)$ (or $(\b,\th)\odot(\cdot)$) thinning operator, with
\begin{equation}\label{from_star_to_odot}
\b={\a r\over r+\ova\mu} \quad \text{and} \quad \th={\mu\over \mu+r}'
\end{equation}
The converse statements holds with
\begin{equation}\label{from_odot_to_star}
\a={\b \over 1-\ovb\th} \quad \text{and} \quad \mu={\th r\over \ov{\th}}
\end{equation}
\end{enumerate}
\end{lemma}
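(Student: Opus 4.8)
\emph{Proof plan.} The plan is to verify both assertions by direct computation with probability generating functions, since each thinning operator in the statement is completely determined by the pgf of its associated $\{G_i\}$ sequence in (\ref{thin_star}). For part (i) I would apply the characterization (\ref{fcn_eq_2}) of the $\star$ operator directly to the two prescribed $NB1$ pgf's of (\ref{NB1_dist}) and show that the resulting $\Psi_G(s)$ coincides with $\Psi_{\b,\th}(s)$ of (\ref{AB1_operator2}); the asserted identity of $(\b,\th)\star(\cdot)$ and $(\b,\th)\odot(\cdot)$ is then immediate, as both operators are built from a $\{G_i\}$ sequence with the same marginal pgf.

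For the computation in (i), write $\Psi_X(s)=\bigl(\tfrac{\ovth}{1-\th s}\bigr)^r$ and, since $\vareps_t\sim NB1(r,\ovb\th)$, $\Psi_\vareps(s)=\bigl(\tfrac{1-\ovb\th}{1-\ovb\th s}\bigr)^r$. The ratio $\Psi_X(s)/\Psi_\vareps(s)$ is then the $r$-th power of the linear fractional function $w(s)=\tfrac{\ovth(1-\ovb\th s)}{(1-\th s)(1-\ovb\th)}$. The inverse of $\Psi_X$ is $\Psi_X^{-1}(y)=\tfrac{y^{1/r}-\ovth}{\th\,y^{1/r}}$, so applying it to $\Psi_X(s)/\Psi_\vareps(s)$ collapses the $r$-th power and leaves $\Psi_G(s)=\tfrac{w(s)-\ovth}{\th\,w(s)}$. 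The one nonroutine step is simplifying the numerator $w(s)-\ovth$: after clearing denominators it factors as $\th\bigl[\ovb+(\b-\ovb\th)s\bigr]$, where the grouping uses $1-\ovb=\b$, and the common factors $\ovth$, $\th$, $(1-\th s)$, $(1-\ovb\th)$ cancel against $\th\,w(s)$ to give $\Psi_G(s)=\tfrac{\ovb+(\b-\ovb\th)s}{1-\ovb\th s}$, which is exactly $\Psi_{\b,\th}(s)$.

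For part (ii) the equivalence of the two operators again reduces to a pgf identity. I would substitute $\b=\tfrac{\a r}{r+\ova\mu}$ and $\th=\tfrac{\mu}{\mu+r}$ of (\ref{from_star_to_odot}) into $\Psi_{\b,\th}(s)$ and simplify using the two bookkeeping identities $\ovb\th=\tfrac{\ova\mu}{r+\ova\mu}$ and hence $1-\ovb\th=\tfrac{r}{r+\ova\mu}$. Here the factor $1-s$ survives and no $r$-th power machinery intervenes, so the simplification is short and yields $1-\tfrac{\a r(1-s)}{r+\ova\mu(1-s)}$, which is the form of $\Psi_G(s)$ in (\ref{NB_thin_star}) after clearing the factor $r$. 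For the converse (\ref{from_odot_to_star}), rather than recompute I would simply check that the two parameter maps are mutual inverses: from $1-\ovb\th=\tfrac{r}{r+\ova\mu}$ one gets $\tfrac{\b}{1-\ovb\th}=\a$, and directly $\tfrac{\th r}{\ovth}=\mu$, so the substitutions of (\ref{from_star_to_odot}) and (\ref{from_odot_to_star}) compose to the identity on the parameter space.

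The only place any real care is needed is the inversion of $\Psi_X$ and the factorization of $w(s)-\ovth$ in part (i); everything else is linear-fractional algebra that closes cleanly, which is why the lemma may fairly be described as a routine exercise.
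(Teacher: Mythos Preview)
Your proposal is correct. The paper actually omits the proof entirely, describing it as ``an easy exercise'', and your direct computation via (\ref{fcn_eq_2}) and (\ref{AB1_operator2}) is exactly the kind of verification the paper has in mind; the factorization $w(s)-\ovth=\ovth\,\th\bigl[\ovb+(\b-\ovb\th)s\bigr]/\bigl[(1-\th s)(1-\ovb\th)\bigr]$ and the bookkeeping identity $1-\ovb\th=r/(r+\ova\mu)$ are the right anchoring steps.
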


Next, we recall a few basic results from \cite{AB1}. 

The \inar1 process $\{X_t\}$ in \cite{AB1} is defined by the equation
\begin{equation}
X_{t+1} =(\b,\th) \odot X_t + \vareps_{t+1},\label{eq_inar_AB1}
\end{equation}
under the same assumptions as the ones set for the \inar1 process defined with the thinning operator $\bth\star(\cdot)$ (see (\ref{eq_inar1})).

The \inar1 model (\ref{eq_inar_AB1}) with an $NB1(r,\th)$ marginal, $r>0$ and $0<\th <1$, was constructed using the forward approach. The stationarity of the process follows from Proposition 2.5 and Lemma 4.1 in \cite{AB1} (see discussion in Section 2). Its innovation sequence is necessarily $NB1(r,\ovb\th)$ and it is time reversible (Corollary 4.3 in \cite{AB1}) . The latter property is in fact a characterization of the process among all the \inar1 processes governed by (\ref{eq_inar_AB1}) (see Proposition  5.1 in \cite{AB1}).

We extend the function $B_l^{(n)}(y)$ in (\ref{A_and_B}) to positive numbers $n$ and $l$ such that $n\ge l$ as follows:
\begin{equation}
B_l^{(n)}(y)={\Ga(n)\over \Ga(l)\Ga(n-l+1)}y^l(1-y)^{n-l}.\label{B_extended}
\end{equation}

We will also use the following formula for the $NB1(r,\th)$ pmf (\ref{NB1_dist}):
\begin{equation}
P(X=k)=B_r^{(k+r)}(\ovth). \label{NB1_B_function}
\end{equation}

In the next proposition we list a few basic properties of the novel negative binomial \inar1 process. The assertions are a mere translation in terms of $\a$ and $\mu$ (aided by Lemma 4.1 and (\ref{NB1_B_function})) of the distributional and correlation properties of the ($NB1(r,\th)$ \inar1 process of (\ref{eq_inar_AB1}) that are summarized in Proposition 4.2 and Corollary 4.3 in \cite{AB1}. The details are omitted.

\begin{proposition}
Let $\a\in(0,1)$ and $\mu, r>0$. Let $\{X_t\}$ be a stationary \inar1 process driven by the thinning operator $(\a,\mu,r)\star(\cdot)$ with a $NB(r,\mu)$ marginal distribution and a $NB(r,\ova\mu)$ innovation distribution. Then, 
\begin{enumerate}
\item[(i)] $E(X_{t+1}|X_t)=\a X_t+\ova\mu$;
\item[(ii)] $Var(X_{t+1}|X_t)=\a\ova (2\mu r^{-1}+1) X_t+\ova\mu(1+r^{-1}\ova\mu)$;
\item[(iii)] The auto-correlation function $\{\rho_k\}$ of $\{X_t\}$ is $\rho_k=\a^k$, $k\ge1$. 
\item[(iv)] The transition probability $\wtilde{p}_{ij}=P(X_{t+1}=j|X_t=i)$ is
\begin{equation}\label{trans_order_1_NB}
\wtilde{p}_{ij}=
\begin{cases}
B_r^{j+r}(\wtilde{q}) \qquad \mbox{if }i=0\\
\displaystyle A_0^{(i)}(\a \wtilde{q}) B_r^{(j+r)}(\wtilde{q})
+\sum_{k=1}^j B_r^{(j-k+r)}(\wtilde{q})\sum_{l=1}^{\min(i,k)} A_l^{(i)}(\a \wtilde{q})B_l^{(k)}(\wtilde{q}),
&\mbox{if }i\ge 1.
\end{cases}
\end{equation}
where $\wtilde{q}={r\over r+\ova\mu}$ and the functions $A_i^{(x)}$ and $B_i^{(k)}$ are given in (\ref{A_and_B}) and (\ref{B_extended}).
\item[(v)] The joint pgf $\varphi(s_1,s_2)$ of $(X_t,X_{t+1})$ is
\begin{equation}\label{bv_pgf}
\varphi(s_1,s_2)=\Biggl[r^{-2}\Bigl( (r+\mu)(r+\ova\mu)-\ova\mu(r+\mu)(s_1+s_2)
+\mu(\ova\mu-r\a) s_1s_2\Bigr)\Biggr]^{-r}, 
\end{equation}
leading to the conclusion that $\{X_t\}$ is time reversible.
\end{enumerate}
\end{proposition}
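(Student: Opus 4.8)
The plan is to exploit the parameter dictionary of Lemma 4.1(ii): each item is a statement about the $(\a,\mu,r)\star(\cdot)$ process, and the substitution $\b=\a r/(r+\ova\mu)$, $\th=\mu/(\mu+r)$ turns it into the corresponding statement for the $NB1(r,\th)$ process (\ref{eq_inar_AB1}), whose conditional moments, autocorrelations, transition probabilities and bivariate pgf are recorded in Proposition 4.2 and Corollary 4.3 of \cite{AB1}. So the uniform strategy is to write the \cite{AB1} formula in $(\b,\th,r)$ form, apply the back-substitution (\ref{from_odot_to_star}), and simplify; for the items where a self-contained check is shorter, I would instead read the answer straight off the explicit pgfs (\ref{GBSO_NB}) and (\ref{NB_thin_star}).

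For (i)--(iii) I would work directly with the summand pgf $\Psi_G$ of (\ref{NB_thin_star}). Differentiating once gives $\Psi_G'(1)=\a$ (equivalently $\b/(1-\ovb\th)=\a$, which is exactly (\ref{from_odot_to_star})), so that the conditional law $(\a,\mu,r)\star X\mid X=x$, being a sum of $x$ iid copies of $G$, has mean $\a x$ and variance $x\,\mathrm{Var}(G)$. Adding $E(\vareps_{t+1})=\ova\mu$ gives (i), and the standard INAR(1) covariance recursion $\rho_k=\Psi_G'(1)\rho_{k-1}=\a\rho_{k-1}$ gives (iii). For (ii) I would compute $\Psi_G''(1)=2\a\ova\mu r^{-1}$, whence $\mathrm{Var}(G)=\Psi_G''(1)+\Psi_G'(1)-\Psi_G'(1)^2=\a\ova(2\mu r^{-1}+1)$; adding the $NB(r,\ova\mu)$ innovation variance $\ova\mu(1+r^{-1}\ova\mu)$ produces (ii).

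Item (iv) is the negative-binomial copy of the geometric computation in Section 3. Putting $\wtilde q=r/(r+\ova\mu)$, a short manipulation recasts (\ref{NB_thin_star}) as $\Psi_G(s)=1-\a\wtilde q\,(1-s)/(1-(1-\wtilde q)s)$, i.e. into the affine form $1-\a\wtilde q+\a\wtilde q\,G_{\wtilde q}(s)$ with $G_{\wtilde q}$ a shifted geometric pgf, exactly as in (\ref{Psi_G_Geo}). The binomial-theorem-plus-power-series argument that produced (\ref{conv_G}) then gives the conditional law of $(\a,\mu,r)\star X\mid X=x$ verbatim with $q$ replaced by $\wtilde q$, and convolving it with the innovation pmf---which by (\ref{NB1_B_function}) equals $P(\vareps=j)=B_r^{(j+r)}(\wtilde q)$, because the complement of the innovation's $NB1$ index is $\wtilde q$---yields (\ref{trans_order_1_NB}) with the extended $B$-function (\ref{B_extended}). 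This is the bookkeeping of (\ref{trans_order_1}), now carrying a noninteger exponent $r$.

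Item (v) is where the real work lies. Conditioning on $X_t$ and using the independence of $\vareps_{t+1}$, the joint pgf of any stationary INAR(1) process factors as $\varphi(s_1,s_2)=\Psi_X\!\bigl(s_1\Psi_G(s_2)\bigr)\,\Psi_\vareps(s_2)$. I would substitute $\Psi_X,\Psi_\vareps$ from (\ref{GBSO_NB}) and $\Psi_G$ from (\ref{NB_thin_star}), clear the nested denominators, and collect powers of $s_1$ and $s_2$; the expected obstacle is precisely this algebraic reduction to the single bracketed $(-r)$-th power in (\ref{bv_pgf}), since the factored form looks markedly asymmetric before simplification. Once in that form the bracket depends on $(s_1,s_2)$ only through $s_1+s_2$ and $s_1s_2$, so $\varphi(s_1,s_2)=\varphi(s_2,s_1)$, giving $(X_t,X_{t+1})\dover(X_{t+1},X_t)$; this is time reversibility, matching Corollary 4.3 and Proposition 5.1 of \cite{AB1} read through Lemma 4.1.
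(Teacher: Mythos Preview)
Your proposal is correct, and your stated uniform strategy---translate the \cite{AB1} results for the $NB1(r,\th)$ process through the dictionary of Lemma~4.1---is exactly the paper's proof, which in fact consists of a single sentence invoking Proposition~4.2 and Corollary~4.3 of \cite{AB1} and omitting all details. Where you go further is in supplying, for each item, a direct self-contained verification from the explicit pgfs (\ref{GBSO_NB}) and (\ref{NB_thin_star}): the moment computations for (i)--(iii), the Section-3-style convolution for (iv) after rewriting $\Psi_G$ in the affine form $1-\a\wtilde q+\a\wtilde q\,G_{\wtilde q}(s)$, and the factorization $\varphi(s_1,s_2)=\Psi_X\bigl(s_1\Psi_G(s_2)\bigr)\Psi_\vareps(s_2)$ for (v). These direct arguments are sound; in particular, the algebraic reduction you flag as the obstacle in (v) does collapse cleanly once you multiply the bracket by $r^2$, since the $\Psi_\vareps(s_2)^{-1/r}$ factor cancels the inner denominator of $\Psi_G(s_2)$ and the remaining cross terms combine via $r\mu+\mu(\ova\mu-r\a)=\ova\mu(r+\mu)$. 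Your computations would serve perfectly well as the ``details omitted'' in the paper; the translation route is shorter only because it offloads those same calculations to \cite{AB1}.
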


Irreducibility and stationarity of the novel $NB(r,\mu$ \inar1 process lead to its ergodicity, as seen in the following proposition.
 
\begin{proposition}
Let $\a\in(0,1)$ and $\mu, r>0$. Let $\{X_t\}$ be a stationary \inar1 process driven by the thinning operator $(\a,\mu,r)\star(\cdot)$ with a $NB(r,\mu)$ marginal distribution and a $NB(r,\ova\mu)$ innovation distribution. Then $\{X_t\}$ is a homogenous Markov chain that is irreducible, positive recurrent and aperiodic, and therefore ergodic.
\begin{proof}
We need to show that $\{X_t\}$ is irreducible, positive recurrent and aperiodic. By Proposition 4.1 and (\ref{trans_order_1_NB}), $\wtilde{p}_{ij}>0$ for every $i,j\ge 0$. Therefore, $\{X_t\}$ is irreducible and aperiodic. Stationarity implies the homogeneity property. The stationary distribution of the process is $NB(r,\mu)$. Therefore, every state of $\{X_t\}$ is positive recurrent  by Theorem 3.2.6, p. 121 in \cite{BREM}.
\end{proof} 
\end{proposition}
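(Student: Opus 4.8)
The plan is to verify, in turn, the four properties that together yield ergodicity of $\{X_t\}$: time-homogeneity, irreducibility, aperiodicity, and positive recurrence. Time-homogeneity is immediate from the defining recursion (\ref{eq_inar1}): the one-step transition law is determined solely by the common distribution of the thinning variables $\{G_i\}$ and of the innovation $\vareps_t$, neither of which depends on the time index $t$, so $\wtilde{p}_{ij}=P(X_{t+1}=j\mid X_t=i)$ is independent of $t$.

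First I would dispose of irreducibility and aperiodicity simultaneously by showing that every one-step transition probability is strictly positive, which is exactly what the explicit formula (\ref{trans_order_1_NB}) from the preceding proposition delivers. For $i=0$ we have $\wtilde{p}_{0j}=B_r^{(j+r)}(\wtilde{q})$, which by (\ref{NB1_B_function}) is a negative binomial mass and hence strictly positive for every $j\ge 0$. For $i\ge 1$ the formula writes $\wtilde{p}_{ij}$ as a finite sum of products of the quantities $A_l^{(i)}(\a\wtilde{q})$ and $B_l^{(k)}(\wtilde{q})$, with $\wtilde{q}=r/(r+\ova\mu)$; since $\a,\wtilde{q}\in(0,1)$ and each such factor is a strictly positive constant times strictly positive powers of $\wtilde{q}$ and $1-\wtilde{q}$, every summand is strictly positive and therefore $\wtilde{p}_{ij}>0$. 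Consequently $\wtilde{p}_{ij}>0$ for all $i,j\ge 0$ on the state space $\bzp$, so the chain reaches every state from every state in a single step (irreducibility) and each self-loop probability $\wtilde{p}_{ii}$ is positive, forcing period $1$ (aperiodicity).

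It then remains to establish positive recurrence, and here I would simply invoke that the chain already carries a stationary distribution by construction, namely the proper probability law $NB(r,\mu)$. For an irreducible Markov chain on the countable state space $\bzp$, possessing an invariant probability distribution is equivalent to positive recurrence, which is the content of the standard recurrence criterion (Theorem 3.2.6 in \cite{BREM}); combining this with irreducibility and aperiodicity gives ergodicity. I do not expect a genuine obstacle, since homogeneity is structural, the stationary law is already in hand, and positivity is packaged in the transition formula. The one point deserving a line of care is the strict positivity of the extended function $B_l^{(n)}(\cdot)$ of (\ref{B_extended}) when $r$ is non-integer, but this is immediate from the positivity of the Gamma function on $(0,\infty)$ together with the positive powers of $\wtilde{q}$ and $1-\wtilde{q}$.
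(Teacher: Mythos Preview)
Your proposal is correct and follows essentially the same route as the paper: you use the explicit transition formula (\ref{trans_order_1_NB}) to see that $\wtilde p_{ij}>0$ for all $i,j\ge 0$, deduce irreducibility and aperiodicity, and then invoke the existing stationary law $NB(r,\mu)$ together with Theorem~3.2.6 in \cite{BREM} for positive recurrence. The only difference is cosmetic---you spell out a bit more carefully why each summand in (\ref{trans_order_1_NB}) is strictly positive, including the non-integer $r$ case via the positivity of $\Gamma$ on $(0,\infty)$.
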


Next, we derive the $h$-step ahead distributional properties of $\{X_t\}$ for $h\ge1$. This requires deriving a few additional formulas for the $NB1(r,\th)$ \inar1 process.

We will need the following properties of the operator $(\b,\th)\odot(\cdot)$ (see equations (2.9) and (2.10) in \cite{AB1}).

Let $\b\in (0,1)$ and $\th\in [0,1)$. The $h$-fold composition $h\ge 1$ of the operator $(\b,\th)\odot(\cdot)$ satisfies 
\begin{equation}
(\b,\th)\odot(\b,\th)\odot\cdots\odot(\b,\th)*Y\dover(\b_h,\th)\odot Y \text{ and } \Psi_{(\b_h,\th)\odot Y}(s)=\Psi_Y\bigl(\Psi_{\b_h,\th}(s)\bigr), \label{h_fold_thin_odot}
\end{equation}
where $\Psi_{\b_h,\th}(s)$ is as in (\ref{AB1_operator2}) and the sequence $\{\b_h\}$ is defined recursively by
\begin{equation}
\b_h=\frac{\b^h\ovth}{(1-\ovb\th)^h-\b^h\th}; \qquad \ovb_h=1-\b_h=\frac{(1-\ovb\th)^h-\b^h}{(1-\ovb\th)^h-\b^h\th}. \label{h_fold_thin_odot_2}
\end{equation}

By Proposition 2.2 in \cite{AB1}, the conditional distribution of $(\b_h,\th) \odot X_t$ given $X_t=x$ has pgf $\Psi_X\bigl(\Psi_{\b_h,\th}(s)\bigr)^x$, where
\begin{equation}\label{Psibh}
\Psi_{\b_h,\th}(s)=\Bigl(1-\b_h{1-s\over 1-\ovb_h\th s}\Bigr)^x,
\end{equation}  
and pmf
\begin{equation}\label{pkxh}
\wtilde{p}_k^{(x)}(h)=P((\b_h,\th) \odot X_t=k|X=x)=
\begin{cases}
\ovb_h^{\ x} & \mbox{if }k=0\\
\displaystyle\sum_{i=1}^{\min(k,x)}A_i^{(x)} (\b_h) B_i^{(k)}(1-\ovb_h\th) &
\mbox{if }k\ge 1,
\end{cases}
\end{equation}

where the functions $A_i^{(x)}$ and $B_i^{(k)}$ are given in (\ref{A_and_B}).

\begin{lemma}
Let $\{X_t\}$ be the $NB1(r,\th)$ stationary \inar1 process with an $NB1(r,\ovb\th)$ innovation distribution. The following assertions hold.
\begin{enumerate}
\item[(i)] For every $h\ge 1$, the $h$-step ahead equation of $\{X_t\}$ is
\begin{equation}
X_{t+h}\dover (\b_h,\th)\odot X_t+\wtilde{\vareps}_t^{(h)}; \qquad \wtilde{\vareps}_t^{(h)}=\sum_{j=1}^h (\b_{h-j},\th)\odot \vareps_{t+j}. \qquad \label{h_step_inar_AB1_1}
\end{equation}

\item[(ii)] For every $t\ge 1$, $\wtilde{\vareps}_t \sim NB1(r,\ovb_h\th)$.
\item[(iii)]  The conditional pgf $\Psi_{X_{t+h}|X_t}(s)$ of $X_{t+h}$ given $X_t$ is 
\[
\Psi_{X_{t+h}|X_t}(s)=\Biggl(1-\b_h{1-s\over 1-\ovb_h\th s}\Biggr)^{X_t}\Biggl({1-\ovb_h\th\over 1-\ovb_h\th s}\Biggr)^r,
\] 
\item[(iv)] The conditional mean and variance of $X_{t+}h$ given $X_t$
\[
E(X_{t+h}|X_t)={\b_h\over 1-\ovb_h\th}X_t+{r\ovb_h\th\over 1-\ovb_h\th}
\]
and
\[
Var(X_{t+h}|X_t)={\b_h\ovb_h(1+\th)\over (1-\ovb_h\th)^2}X_t+{r\ovb_h\th\over (1-\ovb_h\th)^2}.
\]
\item[(v)] The transition probability $\wtilde{p}_{ij}^{(h)}=P(X_{t+h}=j|X_t=i)$ is $\wtilde{p}_{0j}^{(h)}=B_r^{(j+r)}(1-\ovb_h\th)$ for $i=0$, and for $i\ge1$,
\begin{plain}
\begin{equation}\label{trans_order_h_NB}
\eqalign{
\widetilde{p}_{ij}^{(h)}=A_0^{(i)}(\b_h)B_r^{(j+r)}(1-\ovb_h\th)&+\cr
\sum_{k=1}^j B_r^{(j-k+r)}(1-\ovb_h\th)&\sum_{l=1}^{\min(i,k)} A_l^{(i)}(\b_h)B_l^{(k)}(1-\ovb_h\th).\cr}
\end{equation}
\end{plain}
\end{enumerate}
\end{lemma}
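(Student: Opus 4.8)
The plan is to prove the five assertions in sequence, with (i) and (ii) carrying the structural content and (iii)--(v) following almost mechanically.

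First I would establish (i) by induction on $h$, starting from the one-step equation (\ref{eq_inar_AB1}). Two facts drive the argument: the distributivity of the operator over independent sums, $(\b,\th)\odot(A+B)\dover(\b,\th)\odot A+(\b,\th)\odot B$ with the two thinnings independent, which is immediate from the random-sum definition (\ref{AB1_operator}); and the composition rule (\ref{h_fold_thin_odot}), in the sharpened form $(\b,\th)\odot(\b_k,\th)\odot(\cdot)\dover(\b_{k+1},\th)\odot(\cdot)$. Applying $(\b,\th)\odot(\cdot)$ to the inductive hypothesis for $X_{t+h}$ and distributing over the sum shifts every index in $\{\b_{h-j}\}$ up by one, while the newly added $\vareps_{t+h+1}$ enters as the $\b_0$-term; since $\b_0=\ovth/(1-\th)=1$ by (\ref{h_fold_thin_odot_2}), the convention $(\b_0,\th)\odot Y=Y$ absorbs it correctly and closes the induction.

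For (ii) the efficient route is to observe that $(\b_h,\th)\odot(\cdot)$ is itself of the linear-fractional form (\ref{AB1_operator2}) with $\b$ replaced by $\b_h$. By (i) together with stationarity, the pair $(X_t,X_{t+h})$ obeys a one-step \inar1 relation with $NB1(r,\th)$ marginal under the thinning $(\b_h,\th)\odot(\cdot)$; hence Lemma 4.1(i) applies verbatim under $\b\mapsto\b_h$ and forces $\wtilde\vareps_t^{(h)}\sim NB1(r,\ovb_h\th)$. If one prefers a direct verification, the pgf of $\wtilde\vareps_t^{(h)}$ is $\Psi_X(s)/\Psi_X(\Psi_{\b_h,\th}(s))=\bigl((1-\th\Psi_{\b_h,\th}(s))/(1-\th s)\bigr)^r$, and the required reduction rests on the identity $1-\th\Psi_{\b_h,\th}(s)=(1-\ovb_h\th)(1-\th s)/(1-\ovb_h\th s)$. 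It is worth noting that this identity holds for \emph{any} $\b_h\in(0,1)$ with $\ovb_h=1-\b_h$, not merely for the particular value in (\ref{h_fold_thin_odot_2}); this is exactly what makes the substitution $\b\mapsto\b_h$ into Lemma 4.1(i) legitimate, so (ii) carries essentially no genuine obstacle.

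With (i) and (ii) in hand, the remaining parts are bookkeeping. Assertion (iii) combines the conditional thinning pgf $\Psi_{\b_h,\th}(s)^{X_t}$ (from the definition of $(\b_h,\th)\odot(\cdot)$ and (\ref{Psibh})) with the independent $NB1(r,\ovb_h\th)$ innovation pgf from (ii). Differentiating the resulting pgf at $s=1$ yields (iv): since $\Psi_{\b_h,\th}'(1)=\b_h/(1-\ovb_h\th)$ and $\Psi_{\b_h,\th}''(1)=2\b_h\ovb_h\th/(1-\ovb_h\th)^2$ (the second derivative being painless because $u(s)=(1-s)/(1-\ovb_h\th s)$ has $u'(s)$ with an $s$-free numerator), the thinning contributes conditional mean $\b_h(1-\ovb_h\th)^{-1}X_t$ and variance $\b_h\ovb_h(1+\th)(1-\ovb_h\th)^{-2}X_t$, to which the $NB1(r,\ovb_h\th)$ innovation adds the constants $r\ovb_h\th(1-\ovb_h\th)^{-1}$ and $r\ovb_h\th(1-\ovb_h\th)^{-2}$; these are precisely the one-step formulas of Proposition 4.1(i)--(ii), transcribed for the $\odot$-operator, under $\b\mapsto\b_h$. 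Finally, (v) is obtained by convolving the conditional pmf (\ref{pkxh}) of $(\b_h,\th)\odot X_t$ given $X_t=i$ with the innovation pmf $P(\wtilde\vareps_t^{(h)}=m)=B_r^{(m+r)}(1-\ovb_h\th)$, read off from (ii) via (\ref{NB1_B_function}); isolating the $k=0$ term, for which $\wtilde p_0^{(i)}(h)=\ovb_h^{\,i}=A_0^{(i)}(\b_h)$, reproduces the two-part expression (\ref{trans_order_h_NB}) in exactly the same way the $h=1$ case produced (\ref{trans_order_1_NB}). The only real care needed throughout is consistent tracking of the two parameterizations $\b_h$ and $\ovb_h\th$ so that the convolution aligns with the $NB1$ pmf form (\ref{NB1_B_function}).
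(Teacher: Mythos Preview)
Your proposal is correct, and for parts (iii)--(v) it coincides with the paper's proof. Parts (i) and (ii) take somewhat different routes, worth noting.

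For (i), you run an explicit induction on $h$ using distributivity of $(\b,\th)\odot(\cdot)$ over independent sums together with the composition rule $(\b,\th)\odot(\b_k,\th)\odot(\cdot)\dover(\b_{k+1},\th)\odot(\cdot)$. The paper instead iterates the pgf functional equation $\Psi_X(s)=\Psi_X(\Psi_{\b,\th}(s))\Psi_\vareps(s)$ to reach $\Psi_X(s)=\Psi_X(\Psi_{\b_h,\th}(s))\prod_{j=1}^h\Psi_\vareps(\Psi_{\b_{h-j},\th}(s))$ and reads off (\ref{h_step_inar_AB1_1}) from there. Your induction is arguably the more careful formulation, since the paper's pgf identity is a statement about the stationary marginal and, strictly speaking, needs a word to be upgraded to the process-level representation.

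For (ii) the difference is more substantive. You obtain $\Psi_{\wtilde\vareps^{(h)}}(s)=\Psi_X(s)/\Psi_X(\Psi_{\b_h,\th}(s))$ from the functional equation and collapse it in one step via the identity $1-\th\Psi_{\b_h,\th}(s)=(1-\ovb_h\th)(1-\th s)/(1-\ovb_h\th s)$; equivalently, you observe that the $h$-step relation is itself an \inar1 relation of the same type and invoke Lemma~4.1(i) under $\b\mapsto\b_h$. The paper instead computes the pgf of each summand,
\[
\Psi_\vareps\bigl(\Psi_{\b_l,\th}(s)\bigr)=\Biggl(\frac{1-\ovb_{l+1}\th}{1-\ovb_l\th}\cdot\frac{1-\ovb_l\th s}{1-\ovb_{l+1}\th s}\Biggr)^r,
\]
and multiplies over $l=0,\dots,h-1$ to get a telescoping product. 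Your route is shorter and highlights the structural reason for the result; the paper's telescoping computation makes the contribution of each individual innovation term explicit.
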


\begin{proof} For (i), the pgf version of (\ref{eq_inar_AB1}) is $\Psi_X(s)=\Psi_X\bigl(\Psi_{\b,\th}(s)\bigr)\Psi_\vareps(s)$,  with $\Psi_{\b,\th}(s)$ of (\ref{Psibh}), which implies by stationarity that
\begin{equation}
\Psi_X(s)=\Psi_X\bigl(\Psi_{\b_h,\th}(s)\bigr)\prod_{j=1}^h \Psi_\vareps\bigl(\Psi_{\b_{h-j},\th}(s)\bigr)\qquad (h\ge 1).\label{h_step_inar_pgf}
\end{equation}
which in turn implies the $h$-step ahead equation 
\begin{equation}\label{h_step_inar_AB1_2}
X_{t+h}\dover (\b_h,\th)\odot X_t+\sum_{j=1}^h (\b_{h-j},\th)\odot \vareps_{t+j}.
\end{equation}
This proves the representation (\ref{h_step_inar_AB1_1}). We note that for every $l\ge1$, the random variable $(\b_l,\th)\odot \vareps_t$ (recall $\vareps_t \sim NB1(r,\ovb\th)$) has pgf 
\[
\Psi_\vareps(\Psi_{\b_l,\th}(s))=\Biggl({1-\ovb_{l+1}\th\over 1-\ovb_l\th}\cdot {1-\ovb_l\th s\over 1-\ovb_{l+1}\th s}\Biggr)^r.
\]
It follows that the pgf of $\wtilde{\vareps}_t$, $\Psi_{\wtilde{\vareps}}(s)=\prod_{j=1}^h \Psi_\vareps\bigl(\Psi_{\b_{h-j},\th}(s)\bigr)$ (with $\b_0=1$) simplifies to $\Psi_{\wtilde{\vareps}}(s)=\Bigl({1-\ovb_h\th\over 1-\ovb_h\th s}\Bigr)^r$, which proves (ii). Part (iii) is a straightforward consequence of equation (\ref{h_step_inar_AB1_1}) and the independence assumption. The formulas in (iv) are derived by computing the first and second derivatives of $\Psi_{X_{t+h}|X_t}(s)$ at $s=1$. For (v) we note the transition probability $\wtilde{p}_{ij}^{(h)}=P(X_{t+h}=j|X_t=i)$ results from the convolution of $\{\wtilde{p}_k^{(i)}(h)\}$ of (\ref{pkxh}) and the pmf of $\wtilde{\vareps}_t$. The formula for the latter is (see (\ref{B_extended}) and (\ref{NB1_B_function})) 
\[
P(\wtilde{\vareps}_t=k)= \frac{\Ga(k+r)}{\Ga(k+1)\Ga(r)}(1-\ovb_h\th)^r (\ovb_h\th)^k=B_r^{(k+r)}(1-\ovb_h\th).
\]
An application of the formula for the convolution of two pmf's yields (\ref{trans_order_h_NB}).
\end{proof}

We now present the results above in terms of the parameterization used in \cite{GBSO}.  The basic formulas used in the translation are (\ref{from_star_to_odot}) and (\ref{from_odot_to_star}). In addition,  we define 
$\wtilde{q}_h={r\over r+(1-\a^h)\mu}$. The following equations are easily shown to hold:
\[
1-\ovb_h\th=\wtilde{q}_h; \quad \b_h=\a^h\wtilde{q}_h; \quad; \ovb_h\th=1-\wtilde{q}_h; \quad \ovb_h=1-\a^h\wtilde{q}_h.
\]

\begin{proposition}
Let $\a\in(0,1)$, $\mu, r>0$, and $h\ge 1$. Let $\{X_t\}$ be a stationary \inar1 process driven by the thinning operator $(\a,\mu,r)\star(\cdot)$ with a $NB(r,\mu)$ marginal distribution and a $NB(r,\ova\mu)$ innovation distribution. Then, 
\begin{enumerate}
\item[(i)] The conditional pgf $\Psi_{X_{t+h}|X_t}(s)$ of $X_{t+}h$ given $X_t$ is
\[
\Psi_{X_{t+h}|X_t}(s)=\Biggl(1-\a^h\wtilde{q}_h{1-s\over 1-(1-\wtilde{q}_h) s}\Biggr)^{X_t}\Biggl({\wtilde{q}_h\over 1-(1-\wtilde{q}_h) s}\Biggr)^r;
\]
\item[(ii)] $E(X_{t+h}|X_t)=\a^h X_t+\mu(1-\a^h)$;
\item[(iii)] $Var(X_{t+h}|X_t)=(2\mu r^{-1}+1)\a^h(1-\a^h) X_t+\mu(1-\a^h)(1+(1-\a^h)\mu r^{-1})$;
\item[(iv)] The transition probability $\wtilde{p}_{ij}^{(h)}=P(X_{t+h}=j|X_t=i)$ is
\begin{equation}\label{trans_order_1_NB_2}
\wtilde{p}_{ij}^{(h)}=
\begin{cases}
B_r^{j+r}(\wtilde{q}_h)\quad \mbox{if }i=0\\
\displaystyle A_0^{(i)}(\a^h\wtilde{q}_h)B_r^{(j+r)}(\wtilde{q}_h)+
\sum_{k=1}^j B_r^{(j-k+r)}(\wtilde{q}_h)\sum_{l=1}^{\min(i,k)} A_l^{(i)}(\a^h\wtilde{q}_h)B_l^{(k)}(\wtilde{q}_h)
&\mbox{if }i\ge 1.
\end{cases}
\end{equation}
\end{enumerate}
\end{proposition}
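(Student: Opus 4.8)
The plan is to obtain every assertion as a direct translation of the corresponding $NB1(r,\th)$ formula in Lemma 4.2 into the $(\a,\mu,r)$ parameterization, using the dictionary of Lemma 4.1 together with the four identities
\[
1-\ovb_h\th=\wtilde{q}_h,\qquad \b_h=\a^h\wtilde{q}_h,\qquad \ovb_h\th=1-\wtilde{q}_h,\qquad \ovb_h=1-\a^h\wtilde{q}_h
\]
recorded just before the statement, where $\wtilde{q}_h=r/(r+(1-\a^h)\mu)$. By Lemma 4.1 the process driven by $(\a,\mu,r)\star(\cdot)$ is exactly the $NB1(r,\th)$ process of (\ref{eq_inar_AB1}) under $\b=\a r/(r+\ova\mu)$ and $\th=\mu/(\mu+r)$, so nothing probabilistic remains: each part reduces to substituting these identities (and, where $\th$ appears, $\th=\mu/(\mu+r)$) into formulas already established.

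For (i) and (iv) the substitution is immediate. Inserting $\b_h=\a^h\wtilde{q}_h$, $\ovb_h\th=1-\wtilde{q}_h$ and $1-\ovb_h\th=\wtilde{q}_h$ into the conditional pgf of Lemma 4.2(iii) turns the factor $1-\b_h(1-s)/(1-\ovb_h\th s)$ into $1-\a^h\wtilde{q}_h(1-s)/(1-(1-\wtilde{q}_h)s)$ and the innovation factor $((1-\ovb_h\th)/(1-\ovb_h\th s))^r$ into $(\wtilde{q}_h/(1-(1-\wtilde{q}_h)s))^r$, which is precisely (i). The same three replacements carried out in (\ref{trans_order_h_NB}), so that the arguments $\b_h$ and $1-\ovb_h\th$ of the functions $A_l^{(i)}$, $B_l^{(k)}$ and $B_r^{(\cdot)}$ become $\a^h\wtilde{q}_h$ and $\wtilde{q}_h$, give (iv) for $i\ge1$; the case $i=0$ follows from the $i=0$ line of Lemma 4.2(v).

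Parts (ii) and (iii) come from the conditional mean and variance of Lemma 4.2(iv), and here a short algebraic reduction is needed. For the mean, $\b_h/(1-\ovb_h\th)=\a^h\wtilde{q}_h/\wtilde{q}_h=\a^h$, while the intercept $r\ovb_h\th/(1-\ovb_h\th)=r(1-\wtilde{q}_h)/\wtilde{q}_h$ collapses to $\mu(1-\a^h)$ once $\wtilde{q}_h=r/(r+(1-\a^h)\mu)$ is inserted, giving (ii). The variance is the only step that is not a one-line substitution: its slope $\b_h\ovb_h(1+\th)/(1-\ovb_h\th)^2=\a^h(1-\a^h\wtilde{q}_h)(1+\th)/\wtilde{q}_h$ must be simplified using $1+\th=(2\mu+r)/(\mu+r)$ and $1-\a^h\wtilde{q}_h=(1-\a^h)(r+\mu)/(r+(1-\a^h)\mu)$, after which the factors $(r+\mu)$ and $(r+(1-\a^h)\mu)$ cancel to leave $(2\mu r^{-1}+1)\a^h(1-\a^h)$; the intercept $r\ovb_h\th/(1-\ovb_h\th)^2$ reduces in the same way to $\mu(1-\a^h)(1+(1-\a^h)\mu r^{-1})$. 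I expect this cancellation in the variance slope to be the only place demanding care, since it is the single assertion in which $\th$ itself, rather than the combinations already tabulated, must be re-expressed through $\mu$ and $r$.
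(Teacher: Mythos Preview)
Your proposal is correct and follows exactly the paper's approach: the paper presents this proposition as a direct translation of Lemma 4.2 into the $(\a,\mu,r)$ parameterization via Lemma 4.1 and the four identities $1-\ovb_h\th=\wtilde{q}_h$, $\b_h=\a^h\wtilde{q}_h$, $\ovb_h\th=1-\wtilde{q}_h$, $\ovb_h=1-\a^h\wtilde{q}_h$, omitting the details entirely. In fact you supply more than the paper does, since you carry out the algebraic reductions for (ii) and (iii) explicitly and correctly.
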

As noted earlier, setting $r=1$ in (i) and (iii) yields simpler formulas than the ones given in \cite{GBSO} for the conditional pgf and conditional variance of $X_{t+h}$ given $X_t$  of the novel geometric \inar1 process.

We conclude with a weak convergence result that leads to a representation of the novel negative binomial \inar1 process by moving-average process of infinite order denoted MA($\infty$).

\begin{proposition}
Let $\a\in(0,1)$, $\mu, r>0$ and $(\b,\th)$ as in (\ref{from_star_to_odot}). Let $\{X_t\}$ be a stationary \inar1 process driven by the thinning operator $(\a,\mu,r)\star(\cdot)$ with a $NB(r,\mu)$ marginal distribution and a $NB(r,\ova\mu)$ innovation distribution. Then,
\begin{equation}
X_t\dover \sum_{j=0}^\infty (\b_j,\th)\odot\vareps_{t-j}\dover\sum_{j=0}^\infty (\a^j,\mu,r)\star\vareps_{t-j} \qquad (t\in \mathbb{Z}). \label{MA_infty}
\end{equation}
\end{proposition}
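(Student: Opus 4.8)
The plan is to reduce the statement to the $(\b,\th)\odot(\cdot)$ formulation and then obtain the series as the limit of the backward iteration of the $NB1(r,\th)$ \inar1 recursion (\ref{eq_inar_AB1}). By Lemma 4.1 the operator $(\a,\mu,r)\star(\cdot)$ coincides with $(\b,\th)\odot(\cdot)$ under the reparameterization (\ref{from_star_to_odot}), so it suffices to establish the first equality in (\ref{MA_infty}) for the $NB1(r,\th)$ process and then translate. Setting $h=n$ and replacing $t$ by $t-n$ in the $h$-step representation (\ref{h_step_inar_AB1_1}) of Lemma 4.2, and reindexing the innovation sum via $i=n-j$, I would first record the finite truncation
\[
X_t\dover (\b_n,\th)\odot X_{t-n}+\sum_{j=0}^{n-1}(\b_j,\th)\odot\vareps_{t-j}\qquad(n\ge 1),
\]
with the convention $\b_0=1$. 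The proof then consists in letting $n\to\infty$ and controlling the two pieces.

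For the remainder term I would show $\b_n\to 0$. From (\ref{h_fold_thin_odot_2}) one has $\b_n=\bigl(\b/(1-\ovb\th)\bigr)^n\ovth\big/\bigl(1-(\b/(1-\ovb\th))^n\th\bigr)$, so everything hinges on the inequality $\b<1-\ovb\th$, which holds because $1-\ovb\th-\b=\ovb\,\ovth>0$; hence $(\b/(1-\ovb\th))^n\to 0$, giving $\b_n\to 0$ and $\ovb_n\to 1$. Consequently $\Psi_{\b_n,\th}(s)=1-\b_n\tfrac{1-s}{1-\ovb_n\th s}\to 1$ for each $s\in[0,1]$, so the pgf $\Psi_X(\Psi_{\b_n,\th}(s))$ of $(\b_n,\th)\odot X_{t-n}$ tends to $\Psi_X(1)=1$, i.e. the remainder converges in distribution, equivalently in probability, to $0$. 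For the partial sums $S_n=\sum_{j=0}^{n-1}(\b_j,\th)\odot\vareps_{t-j}$, I would exploit the stationarity identity (\ref{h_step_inar_pgf}): reindexing its product exactly as above shows $\Psi_{S_n}(s)=\prod_{i=0}^{n-1}\Psi_\vareps(\Psi_{\b_i,\th}(s))$, so that (\ref{h_step_inar_pgf}) reads $\Psi_X(s)=\Psi_X(\Psi_{\b_n,\th}(s))\,\Psi_{S_n}(s)$ and therefore $\Psi_{S_n}(s)=\Psi_X(s)/\Psi_X(\Psi_{\b_n,\th}(s))\to\Psi_X(s)$. By the continuity theorem for probability generating functions, $S_n\dover X$ with $X\sim NB1(r,\th)$.

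Since the summands $(\b_j,\th)\odot\vareps_{t-j}$ are independent and $\mathbb{Z}_+$-valued, the partial sums $S_n$ are nondecreasing, hence converge almost surely to a limit $S_\infty\in[0,\infty]$; uniqueness of the distributional limit identifies $S_\infty\dover X$, so the limit is finite almost surely and the series $\sum_{j\ge 0}(\b_j,\th)\odot\vareps_{t-j}$ converges almost surely to a random variable with pgf $\Psi_X$. This is the first equality in (\ref{MA_infty}). For the second equality I would invoke the operator identification: by Lemma 4.1 and the relations $\b_h=\a^h\wtilde{q}_h$, $\wtilde{q}_h=1-\ovb_h\th$ recorded before Proposition 4.3, the $j$-fold composition $(\b_j,\th)\odot(\cdot)$ is precisely the $\star$ operator with $\a$ replaced by $\a^j$, namely $(\b_j,\th)\odot(\cdot)=(\a^j,\mu,r)\star(\cdot)$; applying this term by term yields the second representation.

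The routine parts are the backward iteration and the algebraic identities for $\b_n$. The one step that deserves care — and that I regard as the main obstacle — is the passage from convergence of the finite truncation to a genuine almost-sure statement about the infinite series: one must justify that the remainder $(\b_n,\th)\odot X_{t-n}$ is asymptotically negligible and that the monotone, independent partial sums have a proper limit. The generating-function identity (\ref{h_step_inar_pgf}) together with the continuity theorem is the cleanest tool here, as it delivers the limiting distribution of $S_n$ without any direct estimation of the tail of the series.
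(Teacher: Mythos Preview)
Your proof is correct and follows essentially the same route as the paper: backward iteration of the $h$-step representation, the pgf identity (\ref{h_step_inar_pgf}), and $\b_n\to 0$ to kill the remainder and identify the limiting pgf. You are more explicit than the paper in justifying $\b_n\to 0$ and in upgrading the distributional limit of the partial sums to almost-sure convergence via monotonicity, while the paper is more explicit in invoking the extension of the stationary chain to a doubly-infinite sequence (Breiman) before writing $X_{t-n}$ and $\vareps_{t-j}$ for $t\in\mathbb{Z}$.
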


\begin{proof} Since $\{X_t\}$ is a stationary Markov chain, it can be extended to a doubly-infinite stationary sequence $(X_t, n \in \mathbb{Z}$), as seen in \cite{BREI}, Proposition 6.5, p. 105. Therefore, going backwards $h$ steps (contrast with (\ref{h_step_inar_AB1_2})), we have $X_t\dover (\b_h,\th)\odot X_{t-h}+\sum_{j=1}^h (\b_j,\th)\odot \vareps_{t-j}$. In terms 
of pgf's, we have 
\[
\Psi_X(s)=\Psi_X\bigl(\Psi_{\b_h,\th}(s)\bigr)\prod_{j=0}^h\Psi_\vareps\bigl(\Psi_{\b_j,\th}(s)\bigr).
\]
Since $\lim_{h\to\infty}\b_h=0$, $\lim_{h\to 0}\Psi_{\b_h,\th}(s)=1$, which implies
\[
\Psi_X(s)=\prod_{j=0}^\infty \Psi_\vareps\bigl(\Psi_{\b_j,\th}(s)\bigr),
\]
which in turn implies the first equation in (\ref{MA_infty}). The second equation is just the translation of the first in terms of the operator $(\a,\mu,r)\star (\cdot)$.
\end{proof}

\section{Estimation}

In this section we discuss the standard estimation procedures used for \inar1 processes. We describe in detail the conditional least squares and the Yule-Walker methods for the means of the novel negative binomial process and its innovation sequence.  We give a brief outline of the numerical approach to obtain their conditional maximum likelihood estimators. We also include  the conditional least squares for the variances of the process and its innovation. We rely principally on results obtained by several authors for branching processes with immigration.

\subsection{The conditional least squares method for the means}

We start out with the conditional least squares (CLS) method developed in \cite{KLNE} for stochastic processes and in particular for branching processes with immigration (\cite{QUI}, \cite{WIN} and \cite{HESE}). We rely again on the fact that the \inar1 process described by the equations (\ref{thin_star})--(\ref{eq_inar1}) process is interpretable as a branching process with immigration. In the case of the novel negative binomial \inar1 process $\{X_t\}$, $X_t$ is the population size at time $t$ whose offsrpring distribution has pg.f $\Psi_{\b,\th}(s)$ of (\ref{AB1_operator2}), used with the operator $(\b,\th)\odot (\cdot)$, or, equivalently, $\Psi_G(s)$ of (\ref{NB_thin_star}), used with the operator $(\a,\mu, r)\star (\cdot)$. The immigration process is $\vareps_t$ with marginal distribution $NB1(r, \ovb\th)$, or equivalently, $NB(r,\ova\mu)$. Therefore, the results in Section 5 in \cite{KLNE} and in \cite{WIN} apply directly. 

We need (recall) some notation for the various distributional summaries of $X_t$ and $\vareps_t$. Most of the the formulas will be given in both the parametrization in \cite{AB1} and the one in \cite{GBSO}. We will also use the notation $\mu_k^{(Y)}=E[(Y-\mu_Y)^k]$ to designate the $k$-th central moment of a $\mathbb{Z}_+$-valued random variable $Y$.  That will be needed in this section for $k\in \{2,3,4\}$.

The mean and variance of the $G$-variable in (\ref{thin_star}) are
\[
\mu_G=\frac{\b}{1-\ovb\th}=\a \quad \text{and}\quad \sig_G^2=\frac{\b\ovb(1+\th)}{(1-\ovb\th)^2}=\a\ova(2\mu r^{-1}+1).
\]
The third and fourth central moments of the $G$-variable have lengthy formulas. They can be computed recursively via the formula
\[
\mu_k^{(G)}=\frac{\ovb(-\b)^k}{(1-\ovb\th)^k}+\b\sum_{j=0}^k\binom{k}{j}\mu_j^{(G_1)},
\]
where $G_1\sim NB1(1,\ovb\th)$ and whose central moments are given in \cite{JKK}, p. 208. The details are omitted.

The mean and variance of $\vareps_t$ are
\[
\mu_\vareps=\frac{r\ovb\th}{1-\ovb\th}=\ova\mu \quad \text{and}\quad \sig_\vareps^2=\frac{r\ovb\th}{(1-\ovb\th)^2}=\ova\mu(\ova\mu r^{-1}+1),
\]
and its third and fourth central moments are (see again \cite{JKK}, p. 208)
\[
\mu_3^{(\vareps)}=\frac{r\ovb\th(1+\ovb\th)}{(1-\ovb\th)^3}=\mu_\vareps(\mu_\vareps r^{-1}+1)(2\mu_\vareps r^{-1}+1).
\]
and
\[
\mu_4^{(\vareps)}=\frac{r\ovb\th}{(1-\ovb\th)^2}\Bigl(1+\frac{3\ovb\th(r+2)}{(1-\ovb\th)^2}\Bigr)=
\mu_\vareps(\mu_\vareps r^{-1}+1)\Bigl[1+3\mu_\vareps(\mu_\vareps r^{-1}+1)(1+2r^{-1})\Bigr].
\]
The mean and variance of $X_t$ are
\[
\mu=\frac{r\th}{\ovth}\quad \text{and}\quad \sig^2=\frac{r\th}{\ovth^2}=\mu(\mu r^{-1}+1),
\]
and its third and fourth central moments are
\[
\mu_3^{(X)}=\frac{r\th(1+\th)}{\ovth^3}=\mu(\mu r^{-1}+1)(2\mu r^{-1}+1).
\]
and
\[
\mu_4^{(X)}=\frac{r\th}{\ovth^2}\Bigl(1+\frac{3\th(r+2)}{\ovth^2}\Bigr)=
\mu(\mu r^{-1}+1)\Bigl[1+3\mu(\mu r^{-1}+1)(1+2r^{-1})\Bigr].
\]
We will also need 
\begin{equation}\label{c_squared}
c^2=\mu\sig_G^2+\sig_\vareps^2=\a\ova\sig^2.
\end{equation}

We note that the distributional summaries $(\mu, \sig^2, \mu_3^{(X)}, \mu_4^{(X)})$ of the marginal distribution of $\{X_t\}$ can be written in terms of their counterparts for the $\vareps_t$ and $G$ random variables. This is true in general (see Remark 3.2 in \cite{HESE} and formulas therein).
 
The CLS estimators of the parameters $\a$ and $\mu_\vareps$ of the novel negative \inar1 process are obtained by minimizing the conditional sum of squares 
\begin{equation}\label{Q_CLS}
Q_n(\a,\mu_\vareps)=\sum_{t=1}^n(X_t-\a X_{t-1} -\mu_\vareps)^2.
\end{equation}.
The solutions are
\begin{equation}\label{CLS_Estimators}
\widehat\a_{n,cls}=\frac{n\sum_{t=1}^n X_{t-1}X_t-\sum_{t=1}^n X_{t-1}\sum_{t=1}^n X_t}{n\sum_{t=1}^n X_{t-1}^2-\bigl(\sum_{t=1}^n X_{t-1}\bigr)^2}.
\end{equation}
and 
\begin{equation}\label{CLS_Estimators_2}
\widehat{(\mu_\vareps)}_{n,cls}=\frac{\sum_{t=1}^n X_{t-1}^2\sum_{t=1}^n X_t-\sum_{t=1}^n X_{t-1}\sum_{t=1}^n X_{t-1}X_t}{n\sum_{t=1}^n X_{t-1}^2-\bigl(\sum_{t=1}^n X_{t-1}\bigr)^2}.
\end{equation}
or, more compactly,
\begin{equation}\label{CLS_estimators_2p}
\widehat{(\mu_\vareps)}_{n,cls}=\frac{1}{n}\Biggl[\sum_{t=1}^n X_t-\widehat\a_{cls}\sum_{t=1}^n X_{t-1}\Biggr]
\end{equation}
The following result is a direct consequence of Theorem 3.1 and Theorem 3.2 in \cite{KLNE}, along with their interpretation in the authors' Section 5. The proof is omitted. Note the basic assumption for part (ii) is that $\mu_4^{(G)}$ and $\mu_4^{(\vareps)}$ be finite holds in this case.

\begin{proposition} Let $\widehat\a_{n,cls}$ and $\widehat{(\mu_\vareps)}_{n,cls}$ be the CLS estimators  (\ref{CLS_Estimators})--(\ref{CLS_Estimators_2}) of $\a$ and $\mu_\vareps$. The following assertions hold.

\begin{enumerate}

\item[(i)] $\widehat\a_{n,cls}$ and $\widehat{(\mu_\vareps)}_{n,cls}$ are strongly consistent, that is,  ${\widehat\a}_{n,cls} \longrightarrow \a$ a.s. and \\ $\widehat{(\mu_\vareps)}_{n,cls} \longrightarrow \mu_\vareps$ a.s  as $n\to\infty$.

\item[(ii)] $\Bigl(\widehat\a_{n,cls},\ \widehat{(\mu_\vareps)}_{n,cls}\Bigr)'$ satisfy the central limit theorem (BVN is short for bivariate normal),
\begin{equation}\label{CLT_mu_vareps}
\sqrt{n}\begin{bmatrix}
\widehat\a_{n,cls}-\a\\
\widehat{(\mu_\vareps)}_{n,cls}-\mu_\vareps
\end{bmatrix}
\buildrel d \over \longrightarrow
\text{BVN}\Biggl(
\begin{bmatrix} 0\\0\\ \end{bmatrix},\Sig\Biggr)
\qquad \text{as } n\to \infty,
\end{equation}
where 
$\Sig=
\begin{bmatrix}
\Sig_{11} & \Sig_{12}\\
\Sig_{21} & \Sig_{22}
\end{bmatrix}
$
and, referring to the notation above,

$\Sig_{11}=\bigl(\sig_G^2\mu_3^{(X)}+c^2\sig^2\bigr)\sig^{-4}$, %
$\Sig_{12}=\Sig_{21}=-\bigl(\mu\sig_G^2\mu_3^{(X)}+\mu c^2\sig^2-\sig_G^2\sig^4\bigr)\sig^{-4}$ , and
$\Sig_{22}=\bigl(\mu^2\sig_G^2\mu_3^{(X)}+\mu^2 c^2 \sig^2+\sig_\vareps^2\sig^4-\mu\sig_G^2\sig^4\bigl)
\sig^{-4}.$
\end{enumerate}
\end{proposition}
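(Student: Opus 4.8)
The plan is to verify that the novel negative binomial \inar1 process satisfies the hypotheses of Theorems 3.1 and 3.2 in \cite{KLNE} and then to specialize their general asymptotic covariance formula to the present model. The key observation is that, by Proposition 4.1(i), the one-step conditional mean is the affine function $g(X_{t-1};\a,\mu_\vareps)=\a X_{t-1}+\mu_\vareps$, so the criterion (\ref{Q_CLS}) is precisely the Klimko--Nelson conditional sum of squares with parameter $\th=(\a,\mu_\vareps)'$ and gradient $\nabla g=(X_{t-1},1)'$. The residuals $u_t=X_t-\a X_{t-1}-\mu_\vareps$ then form a stationary martingale difference sequence with respect to the natural filtration, and by Proposition 4.1(ii) their conditional second moment is $E(u_t^2\mid X_{t-1})=Var(X_t\mid X_{t-1})=\sig_G^2 X_{t-1}+\sig_\vareps^2$, with $\sig_G^2$ and $\sig_\vareps^2$ as recorded earlier in this section. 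This identification is the entire conceptual step; the rest is verification.

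For part (i) I would argue directly from the closed forms. The estimators in (\ref{CLS_Estimators})--(\ref{CLS_Estimators_2}) are continuous functions of the sample moments $n^{-1}\sum_{t=1}^n X_{t-1}$, $n^{-1}\sum_{t=1}^n X_t$, $n^{-1}\sum_{t=1}^n X_{t-1}^2$ and $n^{-1}\sum_{t=1}^n X_{t-1}X_t$. Since $\{X_t\}$ is stationary and ergodic by Proposition 4.2, the ergodic theorem gives the almost sure limits $\mu$, $\mu$, $\sig^2+\mu^2$ and $\a\sig^2+\mu^2$ respectively, the last using $\rho_1=\a$ from Proposition 4.1(iii). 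After dividing numerator and denominator of (\ref{CLS_Estimators}) by $n^2$, the denominator tends to the marginal variance $\sig^2>0$, so the continuous mapping theorem yields $\widehat\a_{n,cls}\to\a$ almost surely, and then (\ref{CLS_estimators_2p}) gives $\widehat{(\mu_\vareps)}_{n,cls}\to\mu-\a\mu=\ova\mu=\mu_\vareps$ almost surely. This reproduces the conclusion of Theorem 3.1 in \cite{KLNE}.

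For part (ii) I would invoke Theorem 3.2 in \cite{KLNE}, whose limiting covariance is the sandwich matrix $\Sig=V^{-1}WV^{-1}$ with $V=E[\nabla g\,\nabla g']$ and $W=E\bigl[Var(X_t\mid X_{t-1})\,\nabla g\,\nabla g'\bigr]$. The moment hypothesis of that theorem, finiteness of $\mu_4^{(G)}$ and $\mu_4^{(\vareps)}$, holds automatically because offspring and immigration are negative binomial and hence possess finite moments of all orders, as noted in the remark preceding the statement. The entries of $V$ are built from $E(X_{t-1})=\mu$ and $E(X_{t-1}^2)=\sig^2+\mu^2$, giving $\det V=\sig^2$; the entries of $W$ reduce, by the tower property and $Var(X_t\mid X_{t-1})=\sig_G^2 X_{t-1}+\sig_\vareps^2$, to linear combinations of $E(X_{t-1})$, $E(X_{t-1}^2)$ and $E(X_{t-1}^3)$, the last converted to a central moment via $E(X^3)=\mu_3^{(X)}+3\mu\sig^2+\mu^3$. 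Carrying out the product $V^{-1}WV^{-1}$ and collecting terms, using $c^2=\mu\sig_G^2+\sig_\vareps^2$ from (\ref{c_squared}), produces the stated entries $\Sig_{11}$, $\Sig_{12}$ and $\Sig_{22}$.

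The conceptual content is light: once the conditional mean is recognized as affine and the chain as a stationary ergodic branching process with immigration, consistency and asymptotic normality are direct transcriptions of the Klimko--Nelson theory (equivalently, of the branching-process-with-immigration results in \cite{WIN} and \cite{HESE}). The main obstacle is therefore not conceptual but computational: confirming nonsingularity of $V$ (immediate from $\sig^2>0$), checking the martingale-difference and fourth-moment conditions, and then executing the sandwich computation $V^{-1}WV^{-1}$ together with the raw-to-central moment conversions. I expect the matrix algebra leading to $\Sig_{11},\Sig_{12},\Sig_{22}$ to be the most error-prone step, although it is entirely routine.
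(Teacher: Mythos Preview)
Your proposal is correct and follows essentially the same route as the paper: the paper itself omits the proof, stating only that the result is a direct consequence of Theorems~3.1 and~3.2 in \cite{KLNE} (together with their Section~5 specialization to branching processes with immigration) and noting that the fourth-moment hypothesis on $G$ and $\vareps$ is satisfied. Your write-up simply fills in the verification of those hypotheses and carries out the sandwich computation $V^{-1}WV^{-1}$ that the paper leaves implicit; the direct ergodic-theorem argument you give for part~(i) is a minor variation but amounts to the same thing.
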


Relying on the formula $\mu=\mu_\vareps/\ova$, we define an estimator for $\mu$
\begin{equation}\label{mu_CLS}
\widehat\mu_{n,cls}=\frac{\widehat{(\mu_\vareps)}_{n,cls}}{1-\widehat\a_{n,cls}}
\end{equation}

We obtain the analogue of Proposition 5.1 for the pair $(\a,\mu)$. We rely principally on the delta method (see \cite{VDV}), which we briefly summarize in the bivariate case. Assume that in some setting the couple $(\widehat{\del}n,\widehat{\kappa}_n)$ consists of estimators of the parameters $(\del,\kappa)$ and that
\begin{equation}\label{CLT_delta}
\sqrt{n}\begin{bmatrix}
\widehat{\del}_n-\del\\
\widehat{\kappa}_n-\kappa
\end{bmatrix}
\buildrel d \over \longrightarrow
\text{BVN}\Biggl(
\begin{bmatrix} 0\\0\\ \end{bmatrix},W\Biggr)
\qquad \text{as } n\to \infty. 
\end{equation}
Let $g:\mathbb{R}^2\longrightarrow \mathbb{R}^2$, $g(x,y)=(g_1(x,y),g_2(x,y))$, be a twice continuously differentiable function with Jacobian $J_g(x,y)$. Then
\begin{equation}\label{CLT_delta2}
\sqrt{n}\begin{bmatrix}
g_1(\widehat{\del}n,\widehat{\kappa}_n))-g_1(\del,\kappa))\\
g_2(\widehat{\del}n,\widehat{\kappa}_n))-g_2(\del,\kappa)
\end{bmatrix}
\buildrel d \over \longrightarrow BVN\Biggl(\begin{bmatrix} 0\\0\\ \end{bmatrix},
J_g(\del,\kappa) W (J_g(\del,\kappa))'\Biggr)
\qquad \text{as } n\to \infty. 
\end{equation}.

\begin{proposition} The following assertions hold.

\begin{enumerate}
\item[(i)] $\widehat\mu_{n,cls}$ is a strongly consistent estimator of $\mu$.
\item[(ii)] $\Bigl(\widehat\a_{n,cls},\ \widehat\mu_{n,cls}\Bigr)'$ satisfies the central limit theorem, 
\begin{equation}\label{CLT_mu_X}
\sqrt{n}\begin{bmatrix}
\widehat\a_{n,cls}-\a\\
\widehat\mu_{n,cls}-\mu
\end{bmatrix}
\buildrel d \over \longrightarrow
\text{BVN}\Biggl(
\begin{bmatrix} 0\\0\\ \end{bmatrix},
J\,\Sig\, J'\Biggr)
\qquad \text{as } n\to \infty,
\end{equation}
where the matrix $\Sig$ is as in (\ref{CLT_mu_vareps}), Proposition 5.1-(ii), and
$ 
J=\ova^{-1}\begin{bmatrix}
\ova & 0\\
\mu & 1
\end{bmatrix}.
$
\end{enumerate}
\end{proposition}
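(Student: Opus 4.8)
The plan is to derive both assertions from Proposition 5.1 by applying a single smooth reparameterization. In view of the definition (\ref{mu_CLS}) and the identity $\mu=\mu_\vareps/\ova$, the natural map is $g:\mathbb{R}^2\to\mathbb{R}^2$ given by $g(x,y)=(x,\,y/(1-x))$, which carries the estimator pair $\bigl(\widehat\a_{n,cls},\widehat{(\mu_\vareps)}_{n,cls}\bigr)$ to $\bigl(\widehat\a_{n,cls},\widehat\mu_{n,cls}\bigr)$ and the parameter pair $(\a,\mu_\vareps)$ to $(\a,\mu)$.

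For part (i), I would invoke the continuous mapping theorem. Since $\a\in(0,1)$ we have $1-\a\ne 0$, so $g$ is continuous at $(\a,\mu_\vareps)$. Combining this with the strong consistency of $\widehat\a_{n,cls}$ and $\widehat{(\mu_\vareps)}_{n,cls}$ furnished by Proposition 5.1-(i) yields $\widehat\mu_{n,cls}=\widehat{(\mu_\vareps)}_{n,cls}/(1-\widehat\a_{n,cls})\to\mu_\vareps/\ova=\mu$ almost surely.

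For part (ii), I would apply the bivariate delta method recorded in (\ref{CLT_delta})--(\ref{CLT_delta2}) with $(\del,\kappa)=(\a,\mu_\vareps)$, $W=\Sig$, and the map $g$ above; the hypothesis (\ref{CLT_delta}) is exactly the central limit theorem (\ref{CLT_mu_vareps}) of Proposition 5.1-(ii). Computing and then evaluating the Jacobian of $g$ gives
\[
J_g(x,y)=\begin{bmatrix} 1 & 0\\ y/(1-x)^2 & 1/(1-x)\end{bmatrix}, \qquad J_g(\a,\mu_\vareps)=\begin{bmatrix} 1 & 0\\ \mu/\ova & 1/\ova\end{bmatrix}=J,
\]
where the second equality uses $\mu_\vareps=\ova\mu$ to rewrite the lower-left entry $\mu_\vareps/\ova^2$ as $\mu/\ova$ and the lower-right entry $1/(1-\a)$ as $1/\ova$. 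The limit law (\ref{CLT_mu_X}) with asymptotic covariance $J\,\Sig\,J'$ then follows directly from (\ref{CLT_delta2}).

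There is no substantive analytic obstacle here: the result is a textbook application of the continuous mapping theorem and the delta method, and the twice-continuous differentiability of $g$ required by (\ref{CLT_delta2}) holds on any neighbourhood of $(\a,\mu_\vareps)$ avoiding the line $x=1$. The one point that must be handled carefully is the algebraic reduction of the Jacobian, where the identity $\mu_\vareps=\ova\mu$ is needed to recognize that the raw partial derivatives $\mu_\vareps/\ova^2$ and $1/\ova$ reassemble into the matrix $J$ displayed in the statement.
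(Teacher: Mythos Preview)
Your proof is correct and follows essentially the same route as the paper: both apply the delta method with the map $g(x,y)=(x,y/(1-x))$, compute its Jacobian, and evaluate it at $(\a,\mu_\vareps)$ to obtain $J$. Your treatment is slightly more explicit about part (i) (via the continuous mapping theorem) and about the algebraic simplification of the Jacobian entries, but the argument is the same.
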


\begin{proof} We apply the delta method to $(\a,\mu_\vareps)$ and their estimators $(\widehat\a_{n,cls},\widehat{(\mu_\vareps)}_{n,cls})$. Let the function $g:\mathbb{R}^2\longrightarrow \mathbb{R}^2$, $g(x,y)=\bigl(x,\frac{y}{1-x}\bigr)$, restricted to $x\in (0,1)$ and $y>0$. Its Jacobian is
\[
J_g(x,y)=
\begin{bmatrix}
1 & 0\\
\frac{y}{(1-x)^2} & \frac{1}{1-x}
\end{bmatrix}.
\]
Noting that $g(\a,\mu_\vareps)=(\a,\mu_\vareps/\ova)=(\a,\mu)$ and $J_g(\a,\mu_\vareps)=\ova^{-1}
\begin{bmatrix}
\ova & 0\\
\mu & 1
\end{bmatrix}.$
Proposition 5.1-(ii) and (\ref{CLT_mu_vareps}) imply that (\ref{CLT_delta}) holds for $(\a,\mu_\vareps)$, which in turn implies (\ref{CLT_mu_X}) via (\ref{CLT_delta2}). 
\end{proof} 

\subsection{The Yule-Walker estimation method for the means}

We discuss the Yule-Walker (YW) estimators for $\a$, $\mu$, and $\mu_\vareps$. We use the standard method of moments formulas for $\a$ and $\mu$, namely,
\begin{equation}\label{HW_alpha_mu}
\widehat{\a}_{n,yw}=\frac{\sum_{t=1}^{n-1}(X_t-\ov{X}_n)(X_{t+1}-\ov{X}_n)}{\sum_{t=1}^n (X_t-\ov{X}_n^2)}\quad \text{and} \quad 
\widehat{\mu}_{n,yw}=\ov{X}_n,
\end{equation}
where $\ov{X}_n=\frac{1}{n}\sum_{t=1}^n X_t$. For $\mu_\vareps$, we use
\begin{equation}\label{HW_mu_vareps}
\widehat{(\mu_\vareps)}_{n,yw}=(1-\widehat{\a}_{n,yw})\widehat{\mu}_{n,yw}.
\end{equation}

The next result states that the CLS and YW estimators of $\a$, $\mu_\vareps$, and $\mu$ are asymptotically close, implying in particular that for $n$ large enough, $\bigl(\widehat{\a}_{n,yw}-\a, \widehat{(\mu_\vareps)}_{n,yw}-\mu_\vareps\Bigr)'\sim \text{BVN}(\bigl((0,0)',\Sig\bigr)$ (see (\ref{CLT_mu_vareps})) and $\bigl(\widehat{\a}_{n,yw}, \widehat{\mu}_{n,yw}\Bigr)'\sim \text{BVN}(\bigl((0,0)',J\,\Sig\, J'\bigr)$ (see (\ref{CLT_mu_X})).

\begin{proposition}
Let $\{X_t\}$ be the novel negative binomial \inar1 process. Then the CLS and the YW estimators of $\a$, $\mu$ and $\mu_\vareps$ are asymptotically equivalent, that is, $\widehat{\a}_{n,yw}-\widehat{\a}_{n,cls}=o_p(n^{-1/2})$,  $\widehat{(\mu_\vareps)}_{n,yw}-\widehat{(\mu_\vareps)}_{n,cls}=o_p(n^{-1/2})$, and $\widehat{\mu}_{n,yw}-\widehat{\mu}_{n,cls}=o_p(n^{-1/2})$. 
\end{proposition}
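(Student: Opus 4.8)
The plan is to show that each Yule--Walker estimator differs from its conditional least squares counterpart by a term that is $o_p(n^{-1/2})$, working estimator by estimator. The key observation is that all six estimators are built from the same empirical moments, namely $\ov{X}_n$, $\frac{1}{n}\sum X_{t-1}^2$, and $\frac{1}{n}\sum X_{t-1}X_t$, and the only differences come from boundary effects (one estimator sums to $n-1$ and the other to $n$, one uses $X_{t-1}$ where the other uses $X_t$) and from the distinction between centering at $\ov{X}_n$ versus the regression-based centering in the CLS formulas. First I would establish the moment prerequisites: by Proposition 4.3 the process $\{X_t\}$ is ergodic, and since $X_t\sim NB(r,\mu)$ has finite moments of all orders (in particular $\mu_4^{(X)}<\infty$ as recorded in Section 5), the ergodic theorem gives $\ov{X}_n\to\mu$ a.s., $\frac{1}{n}\sum X_{t-1}^2\to E(X_0^2)$ a.s., and $\frac{1}{n}\sum X_{t-1}X_t\to E(X_0X_1)$ a.s., all at the standard $O_p(n^{-1/2})$ fluctuation rate around their limits.

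Next I would treat $\widehat{\a}$. Writing both $\widehat{\a}_{n,cls}$ of (\ref{CLS_Estimators}) and $\widehat{\a}_{n,yw}$ of (\ref{HW_alpha_mu}) as ratios of these empirical moments, I would show that numerator and denominator of the two expressions agree up to terms involving a single omitted or shifted summand, each of which is $O_p(n^{-1})$ after dividing by $n$. The difference of two ratios $\frac{N_1}{D_1}-\frac{N_2}{D_2}$ can be controlled by $\frac{N_1-N_2}{D_1}+N_2\frac{D_2-D_1}{D_1D_2}$; since the denominators converge a.s.\ to the strictly positive limit $\sig^2=\mu(\mu r^{-1}+1)$, and since $N_1-N_2$ and $D_2-D_1$ are each $O_p(n^{-1})$ (a fixed number of boundary terms divided by $n^2$ in the un-normalized sums, i.e.\ $O_p(n^{-1})$ after the $n^{-2}$ normalization cancels the explicit $n$ factors), the whole difference is $O_p(n^{-1})=o_p(n^{-1/2})$. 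Once $\widehat{\a}_{n,yw}-\widehat{\a}_{n,cls}=o_p(n^{-1/2})$ is in hand, the result for $\widehat{\mu}$ follows by comparing $\widehat{\mu}_{n,yw}=\ov{X}_n$ with the CLS-induced estimator $\widehat{\mu}_{n,cls}=\widehat{(\mu_\vareps)}_{n,cls}/(1-\widehat{\a}_{n,cls})$; using (\ref{CLS_estimators_2p}) one has $\widehat{(\mu_\vareps)}_{n,cls}=\ov{X}_n-\widehat{\a}_{n,cls}\cdot\frac{1}{n}\sum X_{t-1}$, so $\widehat{\mu}_{n,cls}=\bigl(\ov{X}_n-\widehat{\a}_{n,cls}\ov{X}_n'\bigr)/(1-\widehat{\a}_{n,cls})$ with $\ov{X}_n'$ differing from $\ov{X}_n$ only by a boundary term, and the algebra collapses to $\widehat{\mu}_{n,cls}=\ov{X}_n+o_p(n^{-1/2})=\widehat{\mu}_{n,yw}+o_p(n^{-1/2})$.

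Finally, for $\mu_\vareps$ I would compare $\widehat{(\mu_\vareps)}_{n,yw}=(1-\widehat{\a}_{n,yw})\widehat{\mu}_{n,yw}$ with $\widehat{(\mu_\vareps)}_{n,cls}$. Since both $\widehat{\a}$ and $\widehat{\mu}$ have already been shown to be asymptotically equivalent across the two methods, and since each sequence converges a.s.\ to a finite limit (so the factors are $O_p(1)$), the equivalence for the product follows from the elementary identity $a_1b_1-a_2b_2=(a_1-a_2)b_1+a_2(b_1-b_2)$, in which both correction terms are $O_p(1)\cdot o_p(n^{-1/2})=o_p(n^{-1/2})$. The main obstacle, and the place demanding the most care, is the bookkeeping of the boundary terms in Step two: one must verify that the mismatch between summing to $n$ versus $n-1$ and between the index shift in $X_{t-1}$ versus $X_t$ contributes only finitely many bounded-in-probability summands, so that after the $1/n$ normalization these are genuinely $O_p(n^{-1})$ rather than merely $o_p(1)$. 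This hinges on the finiteness of the second moment of $X_t$, which is guaranteed here, and on the a.s.\ positivity of the limiting sample variance $\sig^2$, which keeps the denominators bounded away from zero.
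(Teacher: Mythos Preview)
Your proposal is correct and follows essentially the same approach as the paper: both arguments reduce $\widehat{\a}_{n,yw}-\widehat{\a}_{n,cls}$ to a combination of boundary terms of size $O_p(n^{-1})$ divided by denominators converging a.s.\ to $\sigma^2>0$ (the Freeland--McCabe strategy), and then propagate the result to the other two parameters. The only organizational difference is that you handle $\mu$ before $\mu_\vareps$---deriving $\widehat{\mu}_{n,cls}=\ov{X}_n+O_p(n^{-1})$ directly from (\ref{CLS_estimators_2p}) and (\ref{mu_CLS}), then recovering $\mu_\vareps$ via the product identity---whereas the paper does $\mu_\vareps$ first (by subtracting (\ref{CLS_estimators_2p}) from (\ref{HW_mu_vareps})) and $\mu$ last via the ratio; your route is marginally cleaner but not materially different.
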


\begin{proof} We proceed as in \cite{FRMC}. We divide the denominators of $\widehat{\a}_{n,yw}$ and $\widehat{\a}_{n,cls}$ by $n$ and denote them by $D_{n,yw}$ and $D_{n,cls}$, respectively. Note $\lim_{n\to \infty}D_{n,yw}=\lim_{n\to \infty}D_{n,cls}=A-\mu^2$, where $A=\lim_{n\to\infty}n^{-1}\sum_{t=1}^nX_{t-1}=\lim_{n\to\infty}n^{-1}\sum_{t=1}^{n-1}X_{t-1}$ is finite by ergodicity. Fairly lengthy but straighforward algebraic manipulations lead to
\begin{align*}
&\sqrt{n}\bigl(\widehat{\a}_{n,yw}-\widehat{\a}_{n,cls}\bigr)=\frac{X_0X_1D_{n,yw}(1/n
-1)}{D_{n,yw}D_{n,cls}\sqrt{n}}+\frac{D_{n,cls}-D_{n,yw}}{D_{n,yw}D_{n,cls}}\cdot\frac{\sum_{t=2}^n X_tX_{t-1}}{\sqrt{n}}\\ &-\frac{D_{n,cls}\ov{X}_n-D_{n,yw}(\ov{X}_n-X_1/n)}{D_{n,yw}D_{n,cls}}\cdot\frac{\sum_{t=2}^nX_{t-1}}{\sqrt{n}}-\frac{D_{n,cls} \ov
{X}_n}{D_{n,yw}D_{n,cls}}\frac{\cdot{\ov{X}_n-X_1}}{\sqrt{n}}.
\end{align*}
Therefore,
\[
\sqrt{n}\bigl(\widehat{\a}_{n,yw}-\widehat{\a}_{n,cls}\bigr)=o_p(1)+o_p(1)O_p(1)-o_p(1)O_p(1)-o_p(1)=o_p(1),
\]
which implies $\widehat{\a}_{n,yw}-\widehat{\a}_{n,cls}=o_p(n^{-1/2})$. For the case of $\mu_\vareps$, we have by
(\ref{CLS_estimators_2p}) and (\ref{HW_mu_vareps}),
\begin{align*}
&\sqrt{n}\Bigl(\widehat{(\mu_\vareps)}_{n,yw}-\widehat{(\mu_\vareps)}_{n,cls}\Bigr)=\frac{\sum_{t=1}^nX_t-\widehat{\a}_{n,cls}\sum_{t=1}^nX_{t-1}-n\ov{X}_n(1-\widehat{\a}_{n,cls})}{\sqrt{n}}=\\
& \ov{X}_n\sqrt{n}\bigl(\widehat{\a}_{n,yw}-\widehat{\a}_{n,cls}) +\widehat{\a}_{n,cls}\cdot\frac{X_n+X_0}{\sqrt{n}}=o_p(1)+o_p(1)=o_p(1). 
\end{align*}
Finally, we have by (\ref{HW_alpha_mu})and (\ref{HW_mu_vareps}),
\[
\sqrt{n}\Bigl(\widehat{\mu}_{n,yw}-\widehat{\mu}_{n,cls}\Bigr)=\frac{\sqrt{n}\Bigl(\widehat{(\mu_\vareps)}_{n,yw}-\widehat{(\mu_\vareps)}_{n,cls}\Bigr)}{1-\widehat{\a}_{n,cls}}=o_p(1).
\]
\end{proof}

\subsection{The conditional maximum likehood (CML) method for the means}

The conditional log-likelihood function given $X_0$ is
\[
L(\a,\mu,r)=\log\Big[\prod_{t=1}^n P(X_t|X_{t-1})\Big]=\sum_{t=1}^n\log\,P(X_t|X_{t-1}),
\]
where $P(X_t=j|X_{t-1}=i)$ is given in (\ref{trans_order_1_NB}). The CML estimators $(\widehat{\a}_{n,cml},\widehat{\mu}_{n,cml}, \widehat{r}_{n,cml})$ of $(\a,\mu, r)$ that maximize $L(\a,\mu,r)$, and as such, they must be solutions to the non-linear system of equations $\frac{\partial L}{\partial (\a,\mu,r)}=0$, where $\frac{\partial L}{\partial (\a,\mu,r)}=\Bigl(\frac{\partial L}{\partial \a},\frac{\partial L}{\partial \mu} ,\frac{\partial L}{\partial r}\Bigr)'$. In general, CML estimators cannot be obtained in closed form. The standard approach is to rely on a suitable numerical method of optimization (see \cite{GBSO} for example).  

\subsection{The conditional least squares method for the variances} 

Next, we derive estimators for the variances $\sig_G^2$, $\sig_\vareps^2$, and $\sig^2$. We will rely on results in $\cite{WIN}$. We define the conditional sum of squares for variances
\begin{equation}\label{S_CLS}
S_n(\a,\mu_\vareps,\sig_G^2,\sig_\vareps^2)=\sum_{t=1}^n \bigl(U_t^2-E(U_t^2|X_{t-1})\bigr)^2.
\end{equation}
where $U_t=X_t-E(X_t|X_{t-1})$. Since $E(X_t|X_{t-1})=\a X_{t-1}+\mu_\vareps$ and $E(U_t^2|X_{t-1})=Var(X_t|X_{t-1})=\sig_G^2X_{t-1}+\sig_\vareps^2$ (cf. Proposition 4.1-(ii)), we proceed as in \cite{WIN} and replace $\a$ and $\mu_\vareps$ by their CLS estimators, $\widehat\a_{n,cls}$ and $\widehat{(\mu_\vareps)}_{n,cls}$, of (\ref{CLS_Estimators}) and (\ref{CLS_Estimators_2}), respectively, and thus modify the function $S_n(\a,\mu_\vareps,\sig_G^2,\sig_\vareps^2)$ of (\ref{S_CLS}) to
\begin{equation}\label{S_CLS_2}
\widehat{S}_n(\sig_G^2,\sig_\vareps^2)=\sum_{t=1}^n \bigl(\widehat{U}_t^2-\sig_G^2X_{t-1}-\sig_\vareps^2\bigr)^2,
\end{equation}
where $\widehat{U}_t=X_t-\widehat\a_{n,cls} X_{t-1}-\widehat{(\mu_\vareps)}_{n,cls}$.
The estimators of $\sig_G^2$, $\sig_\vareps^2$ that minimize $\widehat{S}_n(\sig_G^2,\sig_\vareps^2)$ are
\begin{equation}\label{Sig_G_hat}
\widehat{(\sig_G^2)}_{n,cls}=\frac{n\sum_{t=1}^n \widehat{U}_t^2X_{t-1}-\sum_{t=1}^n X_{t-1}\sum_{t=1}^n \widehat{U}_t^2}{n\sum_{t=1}^n X_{t-1}^2-\bigl(\sum_{t=1}^n X_{t-1}\bigr)^2}
\end{equation}
and
\begin{equation}\label{Sig_VAREPS_hat}
\widehat{(\sig_\vareps^2)}_{n,cls}=\frac{\sum_{t=1}^n X_{t-1}^2\sum_{t=1}^n 
\widehat{U}_t^2-\sum_{t=1}^n \widehat{U}_t^2X_{t-1}\sum_{t=1}^n X_{t-1}}{n\sum_{t=1}^n X_{t-1}^2-\bigl(\sum_{t=1}^n X_{t-1}\bigr)^2},
\end{equation}
or more compactly,
\[
\widehat{(\sig_\vareps^2)}_{n,cls}=\frac{1}{n}\Biggl[\sum_{t=1}^n \widehat{U}_t^2-\widehat{(\sig_G^2)}_{n,cls}\sum_{t=1}^n X_{t-1}\Biggr].
\]
Let $X$ be a generic random variable with an $NB(r,\mu)$ distribution (the marginal distribution of $\{X_t\}$). We define or recall a a useful function of $X$ along with two key matrices (cf. \cite{WIN}). Let 
\begin{equation}\label{R_X}
R(X)=2\sig_G^4X^2+(\mu_4^{(G)}+4\sig_G^2\sig_\vareps^2-3\sig_G^4)X+\mu_4^{(G)}-\mu_4^{(\vareps)};
\end{equation}
\begin{equation}\label{Sig_X}
\Sig_1=\begin{bmatrix}
E[R(X)X^2] & E[R(X)X]\\
E[R(X)X] & E[R(X)]
\end{bmatrix};
\end{equation}
\begin{equation}\label{Phi_X}
\Phi=\begin{bmatrix}
E(X^2) & E(X)\\
E(X) & 1
\end{bmatrix}
\quad \text{and}\quad 
\Phi'^{-1}=\sig^{-2}\begin{bmatrix}
1 & -E(X)\\
-E(X) & E(X^2)
\end{bmatrix}.
\end{equation}

The following result is a direct consequence of Theorem 3.3 in \cite{WIN}. The proof is omitted. As previously noted, the basic assumption for part (ii)  that $\mu_4^{(G)}$ and $\mu_4^{(\vareps)}$ be finite  holds. 

\begin{proposition} Let $\widehat{(\sig_G^2)}_{n,cls}$ and $\widehat{(\sig_\vareps^2)}_{n,cls}$ be the CLS estimators  (\ref{Sig_G_hat})--(\ref{Sig_VAREPS_hat}) of $\sig_G^2$ and $\sig_\vareps^2$. The following assertions hold.

\begin{enumerate}
\item[(i)] $\widehat{(\sig_G^2)}_{n,cls}$ and $\widehat{(\sig_\vareps^2)}_{n,cls}$ are strongly consistent, that is,  $\widehat{(\sig_G^2)}_{n,cls} \longrightarrow \sig_G^2$ a.s. and $\widehat{(\sig_\vareps^2)}_{n,cls} \longrightarrow \sig_\vareps^2$ a.s  as $n\to\infty$.
\item[(ii)] $\Bigl(\widehat{(\sig_G^2)}_{n,cls},\ \widehat{(\sig_\vareps^2)}_{n,cls}\Bigr)'$ satisfy the central limit theorem
\begin{equation}\label{CLT_sigmas}
\sqrt{n}\begin{bmatrix}
\widehat{(\sig_G^2)}_{n,cls}-\sig_G^2\\
\widehat{(\sig_\vareps^2)}_{n,cls}-\sig_\vareps^2
\end{bmatrix}
\buildrel d \over \longrightarrow
\text{BVN}\Biggl(\begin{bmatrix} 0\\0\\ \end{bmatrix}, \Phi^{-1}\Sig_1\ (\Phi^{-1})'
\Biggr),
\qquad \text{as } n\to \infty.
\end{equation}
with $\Sig$ and $\Phi$ as in (\ref{R_X})--(\ref{Phi_X})
\end{enumerate}
\end{proposition}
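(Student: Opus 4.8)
The plan is to verify that the novel negative binomial \inar1 process satisfies every hypothesis of Theorem 3.3 in \cite{WIN} and then read off its two conclusions, since the two-stage CLS scheme for the variances in (\ref{S_CLS})--(\ref{Sig_VAREPS_hat}) is precisely the one analyzed there for a branching process with immigration.

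First I would make explicit the branching-with-immigration interpretation recorded at the start of Section 5.1: under the operator $(\a,\mu,r)\star(\cdot)$, the process $\{X_t\}$ of (\ref{eq_inar1}) is a subcritical branching process with offspring pgf $\Psi_G(s)$ of (\ref{NB_thin_star}) and $NB(r,\ova\mu)$ immigration. The decisive structural fact is that the conditional variance is affine in the previous state, $Var(X_t|X_{t-1})=\sig_G^2X_{t-1}+\sig_\vareps^2$ (Proposition 4.1-(ii)). The estimators (\ref{Sig_G_hat})--(\ref{Sig_VAREPS_hat}) are exactly the ordinary-least-squares coefficients from regressing the squared plug-in residuals $\widehat{U}_t^2$ on $X_{t-1}$ and an intercept, so they coincide with Winnicki's variance estimators under the identification of $\sig_G^2$ and $\sig_\vareps^2$ with his offspring and immigration variances.

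Next I would confirm the regularity conditions. Subcriticality holds because the offspring mean equals $\a\in(0,1)$; ergodicity is supplied by Proposition 4.2; and the moment hypothesis of Theorem 3.3 --- finiteness of the fourth central moments $\mu_4^{(G)}$ and $\mu_4^{(\vareps)}$ --- holds automatically, both laws being negative binomial and hence possessing moments of all orders (their explicit expressions are listed above). Granting these, part (i) follows by combining the strong consistency of the first-stage estimators $\widehat\a_{n,cls}$ and $\widehat{(\mu_\vareps)}_{n,cls}$ (Proposition 5.1-(i)) with the ergodic theorem: the residuals $\widehat{U}_t$ approximate the true innovations $U_t=X_t-\a X_{t-1}-\mu_\vareps$ uniformly well, so every sample average appearing in (\ref{Sig_G_hat})--(\ref{Sig_VAREPS_hat}) converges almost surely to its stationary expectation, giving $\widehat{(\sig_G^2)}_{n,cls}\to\sig_G^2$ and $\widehat{(\sig_\vareps^2)}_{n,cls}\to\sig_\vareps^2$ a.s.

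For part (ii) I would invoke the asymptotic-normality half of Theorem 3.3, which yields a bivariate normal limit with covariance $\Phi^{-1}\Sig_1(\Phi^{-1})'$, the matrices $\Phi$ and $\Sig_1$ being assembled from the moment function $R(X)$ of (\ref{R_X}) and the stationary $NB(r,\mu)$ law of $X$; all the entries $E[R(X)X^2]$, $E[R(X)X]$, $E[R(X)]$, $E(X)$, $E(X^2)$ are finite because $X$ has moments of every order. The point demanding care --- and the main obstacle --- is that the variance estimators are built from the plug-in residuals $\widehat{U}_t$ rather than the unknown $U_t$, so one must check that the first-stage estimation error does not distort the $\sqrt{n}$-scale limit. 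This is exactly what Winnicki's theorem resolves: the joint CLS construction arranges for that error to be asymptotically absorbed, and the stated covariance $\Phi^{-1}\Sig_1(\Phi^{-1})'$ already accounts for it. The result therefore reduces to confirming that $R(X)$, $\Sig_1$, and $\Phi$ match Winnicki's quantities, which is the routine identification carried out above.
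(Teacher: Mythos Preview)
Your approach is exactly the paper's: the proof there is simply omitted with the remark that the result is a direct consequence of Theorem~3.3 in \cite{WIN}, once one notes that the fourth-moment hypothesis on $G$ and $\vareps$ is met. Your proposal fills in precisely the verification of hypotheses (subcriticality, ergodicity via Proposition~4.2, finite fourth moments) that the paper leaves implicit. One small slip: you write that ``both laws'' are negative binomial, but only $\vareps_t\sim NB(r,\ova\mu)$ is; the offspring law $G$ has pgf (\ref{NB_thin_star}) and is a zero-modified shifted geometric, not a negative binomial --- this does not affect your argument, since $G$ still has finite moments of all orders (cf.\ the recursive formula for $\mu_k^{(G)}$ given in Section~5.1).
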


The formulas $\displaystyle\sig^2=\frac{\mu\sig_G^2+\sig_\vareps^2}{1-\a^2}$ (see (\ref{c_squared}) and $\displaystyle r=\frac{\mu_\vareps^2}{\sig_\vareps^2-\mu_\vareps}$ suggest the following estimators for $\mu$, $\sig^2$, and $r$, respectively: 

\begin{enumerate}
\item[(a)]$\widehat{(\sig^2)}_{n,cls}=\frac{\widehat{(\mu_\vareps)}_{n,cls}\widehat{(\sig_G^2)}_{n,cls}+(1-\widehat{\a}_{n,cls})\widehat{(\sig_\vareps^2)}_{n,cls}}{{(1-\widehat{\a}_{n,cls})^2}(1+\widehat{\a}_{n,cls})}$ for $\sig^2$;
\item[(b)]$\widehat{r}_{n,cls}=\frac{\widehat{(\mu_\vareps)}^2}{\widehat{(\sig_\vareps^2)}_{n,cls}-\widehat{(\mu_\vareps)}_{n,cls}}$ for $r$.
\end{enumerate}

We conclude from Proposition 5.1-(i) and Proposition 5.3-(i) that
 
\begin{corollary}
$\widehat{(\sig^2)}_{n,cls}$, and $\widehat{r}_{n,cls}$ are strongly consistent estimators for $\sig^2$ and $r$, respectively.
\end{corollary}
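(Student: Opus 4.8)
The plan is to express each of the two estimators as a fixed continuous function of the four CLS estimators $\widehat{\a}_{n,cls}$, $\widehat{(\mu_\vareps)}_{n,cls}$, $\widehat{(\sig_G^2)}_{n,cls}$, and $\widehat{(\sig_\vareps^2)}_{n,cls}$, and then to transfer the almost sure convergence of these four quantities through the continuous mapping theorem in its pathwise form (if $x_n\to x$ and the map is continuous at $x$, then the images converge). By Proposition 5.1-(i) the first two converge almost surely to $\a$ and $\mu_\vareps$, and by Proposition 5.3-(i) the last two converge almost surely to $\sig_G^2$ and $\sig_\vareps^2$; since a finite intersection of probability-one events has probability one, the full four-tuple converges almost surely to $(\a,\mu_\vareps,\sig_G^2,\sig_\vareps^2)$ as $n\to\infty$.

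First I would record the two maps explicitly. Writing $F(a,m,u,v)=\dfrac{m\,u+(1-a)v}{(1-a)^2(1+a)}$, we have $\widehat{(\sig^2)}_{n,cls}=F\bigl(\widehat{\a}_{n,cls},\widehat{(\mu_\vareps)}_{n,cls},\widehat{(\sig_G^2)}_{n,cls},\widehat{(\sig_\vareps^2)}_{n,cls}\bigr)$, and writing $H(m,v)=\dfrac{m^2}{v-m}$, we have $\widehat{r}_{n,cls}=H\bigl(\widehat{(\mu_\vareps)}_{n,cls},\widehat{(\sig_\vareps^2)}_{n,cls}\bigr)$. Both $F$ and $H$ are rational functions, hence continuous at every point where their denominators do not vanish. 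A short check confirms they take the correct limiting values: using $\mu_\vareps=\ova\mu$ the common factor $\ova$ cancels in $F$, giving $F(\a,\mu_\vareps,\sig_G^2,\sig_\vareps^2)=\dfrac{\mu\sig_G^2+\sig_\vareps^2}{1-\a^2}=\sig^2$, while $H(\mu_\vareps,\sig_\vareps^2)=\dfrac{\mu_\vareps^2}{\sig_\vareps^2-\mu_\vareps}=r$.

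The only point requiring genuine care, which I expect to be the main obstacle, is verifying that the limiting argument lies in the open domain on which $F$ and $H$ are continuous, since the estimators are only well defined where the denominators are nonzero. For $F$ this amounts to $(1-\a)^2(1+\a)\neq 0$, which holds because $\a\in(0,1)$. For $H$ it amounts to $\sig_\vareps^2-\mu_\vareps\neq 0$; substituting $\mu_\vareps=\ova\mu$ and $\sig_\vareps^2=\ova\mu(\ova\mu r^{-1}+1)$ from Section 5 gives $\sig_\vareps^2-\mu_\vareps=(\ova\mu)^2 r^{-1}>0$ for all $\mu,r>0$ and $\a\in(0,1)$, which is precisely the quantity whose reciprocal (times $\mu_\vareps^2$) recovers $r$. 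With continuity at the limit established, the pathwise continuous mapping theorem yields $\widehat{(\sig^2)}_{n,cls}\to\sig^2$ a.s. and $\widehat{r}_{n,cls}\to r$ a.s., which is the assertion.
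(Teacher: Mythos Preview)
Your argument is correct and follows exactly the route the paper indicates: the corollary is stated as a direct consequence of Proposition~5.1-(i) and Proposition~5.3-(i), and your use of the continuous mapping theorem simply supplies the details the paper leaves implicit. The verification that the denominators are nonzero at the true parameter values is the only nontrivial step, and you have handled it correctly.
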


\begin{remark} 
We note that if $\a$ and $\mu_\vareps$ are assumed known, then the CLS estimators of $\sig_G^2$ and $\sig_\vareps^2$ are obtained by minimizing the function 
\begin{equation}\label{S_CLS_3}
\widetilde{S}_n(\sig_G^2,\sig_\vareps^2)=\sum_{t=1}^n \bigl(U_t^2-\sig_G^2X_{t-1}-\sig_\vareps^2\bigr)^2,
\end{equation}
where, we recall, $U_t=X_t-\a X_{t-1}-\mu_\vareps$. The solutions are obtained by simply replacing the variable $\widehat{U}_t$ by $U_t$ in (\ref{S_CLS})--(\ref{Sig_VAREPS_hat}). 
\[
\widetilde{(\sig_G^2)}_{n,cls}=\frac{n\sum_{t=1}^n U_t^2X_{t-1}-\sum_{t=1}^n X_{t-1}\sum_{t=1}^n U_t^2}{n\sum_{t=1}^n X_{t-1}^2-\bigl(\sum_{t=1}^n X_{t-1}\bigr)^2}
\]
and
\[
\widetilde{(\sig_\vareps^2)}_{n,cls}=\frac{\sum_{t=1}^n X_{t-1}^2\sum_{t=1}^n 
U_t^2-\sum_{t=1}^n U_t^2X_{t-1}\sum_{t=1}^n X_{t-1}}{n\sum_{t=1}^n X_{t-1}^2-\bigl(\sum_{t=1}^n X_{t-1}\bigr)^2}.
\]
Therefore, by Proposition 3.1 in \cite{WIN}, $\widetilde{(\sig_G^2)}_{n,cls}$ and $\widetilde{(\sig_\vareps^2)}_{n,cls}$ are consistent estimators of $\sig_G^2$ and $\sig_\vareps^2$ and $\bigl(\widetilde{(\sig_G^2)}_{n,cls}-\sig_G^2, \widetilde{(\sig_\vareps^2)}_{n,cls}-\sig_\vareps^2\bigr)'$ is asymptotically normal (we omit the details).

An analogous result to Proposition 5.2 can be obtained in this case for the pair $(\sig_G^2,\sig^2)$ by way of the delta method (see (\ref{CLT_delta})--(\ref{CLT_delta2})). The corresponding pair of estimators is 
$\bigl(\widetilde{(\sig_G^2)}_{n,cls},\widetilde{(\sig^2)}_{n,cls}\bigr)$, where 
$\displaystyle \widetilde{(\sig^2)}_{n,cls}=\frac{\mu\widetilde{(\sig_G^2)}_{n,cls}+\widetilde{(\sig_\vareps^2)}_{n,cls}}{(1-\widehat\a_{n,cls})^2}$ and the function \\ $g:\mathbb{R}^2\longrightarrow \mathbb{R}^2$ is $g(x,y)=\bigl(x,\frac{\mu x+y}{1-\a^2}\bigr)$, with $g(\sig_G^2,\sig_\vareps^2)=(\sig_G^2,\sig^2)$. 
\end{remark}

\end{document}